\newtheorem{thm}{Theorem}
\newtheorem{lem}[thm]{Lemma}
\newtheorem{corollary}[thm]{Corollary}
\newcommand{\R}{{\mathbb R}}
\newcommand{\Z}{{\mathbb Z}}
\newcommand{\C}{{\mathbb C}}
\renewcommand{\S}{{\mathbb S}}
\newcommand{\Sph}{{\mathbb S}}
\newcommand{\be}[1]{\begin{equation}\label{#1}}
\newcommand{\ee}{\end{equation}}
\renewcommand{\(}{\left(}
\renewcommand{\)}{\right)}
\newcommand{\iR}[1]{\int_{\R}{#1}\,ds}
\newcommand{\iL}[2]{\int_{\R}{#1}\,d\mu_{#2}}
\newcommand{\isn}[2]{\int_{-1}^{+1}{#1}\(1-z^2\)^{#2}\,dz}
\newcommand{\irdsph}[3]{\int_0^\infty\kern-5pt\int_{\S^{d-1}}#2\,{#1}^{\kern1pt #3}\,\frac{d{#1}}{{#1}}}
\DeclareMathOperator{\re}{Re}
\definecolor{darkgreen}{rgb}{.0,.4,.2}
\newcounter{taggedeq}
\pretocmd{\equation}{\stepcounter{taggedeq}}{}{}
\newcommand{\msc}[1]{\href{https://zbmath.org/classification/?q=cc:#1}{#1}}
\title[Symmetry in CKN inequalities for two-dimensional spinors]{Symmetry and symmetry breaking in interpolation inequalities for two-dimensional spinors -- Preliminary results}
\author[J.~Dolbeault]{Jean Dolbeault}
\address[Jean Dolbeault]{CEREMADE (CNRS UMR n$^\circ$ 7534), PSL university, Universit\'e Paris-Dauphine, Place de Lattre de Tassigny, 75775 Paris 16, France}
\email{dolbeaul@ceremade.dauphine.fr}
\author[R.L.~Frank]{Rupert L. Frank}
\address[Rupert L. Frank]{Mathe\-matisches Institut, Ludwig-Maximilians Universit\"at M\"unchen, The\-resienstr.~39, 80333 M\"unchen, Germany, and Munich Center for Quantum Science and Technology, Schel\-ling\-str.~4, 80799 M\"unchen, Germany, and Mathematics 253-37, Caltech, Pasa\-de\-na, CA 91125, USA}
\email{r.frank@lmu.de}
\author[J.~Weixler]{Jonte Weixler}
\address[Jonte Weixler]{CEREMADE (CNRS UMR n$^\circ$ 7534), PSL university, Universit\'e Paris-Dauphine, Place de Lattre de Tassigny, 75775 Paris 16, France}
\email{jonteweixler@t-online.de}
\date{\today}
\begin{document}
\begin{abstract}
On the two-dimensional Euclidean space, we study a spinorial analogue of the Caffarelli-Kohn-Nirenberg inequality involving weighted gradient norms. This (SCKN) inequality is equivalent to a spinorial Gagliardo-Nirenberg type interpolation inequality on a cylinder as well as to an interpolation inequality involving Aharonov-Bohm magnetic fields, which was analyzed in a paper of 2020. We examine the symmetry properties of optimal functions by linearizing the associated functional around radial minimizers. We prove that the stability of the linearized problem is equivalent to the positivity of a $2\times2$ matrix-valued differential operator. We study the positivity issue via a combination of analytical arguments and numerical computations. In particular, our results provide numerical evidence that the region of symmetry breaking extends beyond what was previously known, while the threshold of the known symmetry region is linearly stable. Altogether, we obtain refined estimates of the phase transition between symmetry and symmetry breaking. Our results also put in evidence striking differences with the three-dimensional (SCKN) inequality that was recently investigated.

\end{abstract}
\keywords{Caffarelli-Kohn-Nirenberg inequalities; Hardy-Sobolev inequalities; interpolation inequalities; Aharonov-Bohm magnetic fields; symmetry; optimal constants; symmetry breaking; linear instability; non-scalar variational problem; spinors}

\subjclass[2020]{Primary: \msc{35B06}; Secondary: \msc{26D10}, \msc{81Q10}.}

\date{\today. {\em File:} \textsf{\jobname.tex}}
\maketitle
\thispagestyle{empty}

\section{Introduction}\label{Sec:Intro}

With $\Vec{\sigma}\coloneqq (\sigma_1,\sigma_2)^T$ where $\sigma_1$ and $\sigma_2$ denote the first two of the three Pauli matrices
\[
\sigma_1=\left(\begin{array}{cc}0 & 1 \\ 1 & 0 \end{array}\right)\,,\quad\sigma_2= \left(\begin{array}{cc} 0 & -i \\ i & 0 \end{array}\right)\,,\quad\sigma_3= \left(\begin{array}{cc} 1 & 0 \\ 0 & -1 \end{array}\right)\,,
\]
let us consider the \emph{spinorial Caffarelli-Kohn-Nirenberg inequality}
\begin{equation}
\label{SCKN}
\tag{SCKN}
    \int_{\mathbb{R}^2} \frac{|\Vec{\sigma}\cdot\nabla \varphi|^2}{|x|^{2\alpha}} \,dx \geq C_{\alpha,p} \left( \int_{\mathbb{R}^2} \frac{| \varphi|^p}{|x|^{\beta p}} \,dx \right)^\frac{2}{p}
\end{equation}
for spinor valued functions $\varphi : \mathbb{R}^2 \to \mathbb{C}^2$. Here $C_{\alpha,p}$ denotes the best possible constant and the parameters $\alpha\in\R\setminus\Z$, $\beta$ and $p$ are such that
\[
\beta < \alpha \leq \beta+1\quad\mbox{and}\quad p=\frac{2}{\beta-\alpha}\,.
\]
The inequality holds for smooth compactly supported spinors and, by density, for any spinor in the natural Sobolev space associated with~\eqref{SCKN}.
Via a suitable Emden-Fowler type coordinate transformation the inequality transforms into a \emph{logarithmic  Caffarelli-Kohn-Nirenberg inequality} for spinor valued functions $\phi \in \mathrm{H}^1(\mathbb{R}\times \mathbb{S}^1,\mathbb{C}^2)$:
\begin{equation}
\label{CKNSlog}
\tag{SCKN$_{\text{log}}$}
    \int_{\mathbb{R}}\int_{\mathbb{S}^1}\( \left|\partial_s \phi(s,\theta)\right|^2+\left|(\alpha -i\sigma_3 \partial_\theta) \phi(s,\theta) \right|^2\) \,ds\,d\theta \geq C_{\alpha,p} \left(\int_{\mathbb{R}}\int_{\mathbb{S}^1}|\phi(s,\theta)|^p \,ds\,d\theta\right)^\frac{2}{p},
\end{equation}
where $\sigma_3$ is the third Pauli matrix and $\theta$ is the angular variable.
As can be seen by visual inspection, all functionals appearing in the inequality are invariant under rotations of the coordinate system around the origin. 
It is therefore intuitive to expect the optimal functions of the inequality to exhibit the same invariance.

The scalar, real valued version of~\eqref{SCKN} reads as
\be{CKN}\tag{CKN}
\int_{\R^d} \frac{|\nabla u|^2}{|x|^{2\,\mathsf a}}\,dx \ge\mathcal C_{\mathsf a,\mathsf b,d}\left(\int_{\R^d} \frac{|u|^p}{|x|^{\mathsf b\,p}}\,dx \right)^{2/p}
\ee
where $d\ge2$ is an integer, $\mathsf a$, $\mathsf b\in\R$ satisfy $\mathsf a\le\mathsf b\le\mathsf a+1$, $\mathsf b>\mathsf a$ if $d=2$ and, to ensure scale invariance, $p=(2\,d)/\big(d-2-2\,(\mathsf b-\mathsf a)\big)$. Inequality~\eqref{CKN} is known as a \emph{Caffarelli-Kohn-Nirenberg inequality}. This inequality goes back to~\cite{MR0136692,MR768824}. A central issue for~\eqref{CKN} has been the study of the symmetry of the optimal functions with, among many other contributions, the results of \cite{MR1794994,MR1973285,Dolbeault_2016}. In dimension $d=2$, optimal functions are radial if and only if
\[
\mathsf b\ge\mathsf b_{\rm FS}(\mathsf a):=\mathsf a-\frac{\mathsf a}{\sqrt{1+\mathsf a^2}}\,.
\]
For complex valued functions, for instance in the presence of magnetic fields, and for spinors, which are fundamental for applications in quantum mechanics, there are very few results on symmetry. Obviously, symmetrization methods cannot apply directly. In presence of a constant magnetic field, Avron, Herbst and Simon in~\cite{AHS} proved that the ground state of the hydrogenic atom has cylindrical symmetry and Erd\"os in~\cite{Erdos96} established a Faber-Krahn inequality for the Schr\"odinger operator: assuming a homogeneous Dirichlet boundary condition, the disk yields the smallest ground state energy among domains of equal area. Bonheure, Nys and van Schaftingen \cite{MR3926043} showed symmetry in some nonlinear variational problems involving a small constant magnetic field. Another instance of a sharp functional inequality for non-scalar objects (vector fields and spinor fields) is the sharp criterion for zero modes of the Dirac equation in presence of a magnetic field obtained in~\cite{Frank_2024}: the optimizers have the structure that we would like to derive in~\eqref{SCKN}. In the case of an Aharonov-Bohm magnetic field, some interpolation inequalities can be reduced to~\eqref{CKN}, which gives rise to the partial symmetry and symmetry breaking results of~\cite{dolbeault_ckn_2025}. As we shall see later, this problem is in some cases equivalent to~\eqref{SCKN}. Taking advantage of this equivalence, we will numerically refine the estimates of~\cite{bonheure_symmetry_2020}. The counterpart of~\eqref{SCKN} in dimension $d=3$ was recently studied in~\cite{dolbeault_ckn_2025} and involves a more complicated notion of symmetry than simple radial symmetry. Although several results are directly adapted from~\cite{dolbeault_ckn_2025}, qualitative results in dimension $d=2$ significantly differ.

\medskip\noindent\emph{Definitions}. In order to state our results, we have to give a definition of \emph{symmetry} adapted to~\eqref{SCKN}. We shall say that a spinor $\phi=(\varphi_1,\varphi_2)^T : \mathbb{R}_+\times [0,2\pi)\to \mathbb{C}^2 $ is \emph{symmetric} if and only if there exists a $k\in\mathbb{Z}$ such that
\[
(r,\theta)\mapsto\begin{pmatrix}
            e^{ik\theta}\varphi_1(r,\theta)\\ e^{-ik\theta}\varphi_2(r,\theta)
        \end{pmatrix}
\]
is independent of $\theta$. Equivalently, a spinor is symmetric if and only if it is a pure eigenstate to the angular momentum operator $-i\sigma_3 \partial_\theta$. With this definition, we shall say that there is \emph{symmetry} if equality in~\eqref{SCKN} is achieved by a symmetric spinor, and \emph{symmetry breaking} if this is not the case.

\medskip\noindent\emph{Reduction of the parameter space}.
    For symmetry \emph{versus} symmetry breaking issues, the parameter space of $(\alpha,p)\in \mathbb{R}\times (2,\infty)$ can be reduced to $(0,1/2)\times(2,\infty)$ and the search for a radial symmetric minimizer. To see this, first notice that transforming a function to its complex conjugate $\phi \to \overline{\phi}$ transforms the \eqref{CKNSlog} for $(\alpha,\,p)$ into the one for $(-\alpha,p)$. Secondly, for a $k\in\mathbb{Z}$ the transformation $\phi \to e^{ik\theta}\phi $ transforms the \eqref{CKNSlog} for $(\alpha,\, p)$ into the one for $(\alpha +k,\,p)$.
    
\medskip\noindent\emph{Interpolation inequalities for Aharonov--Bohm magnetic fields}.
Let $C_{\alpha,p}$ be the best constant in the \eqref{SCKN} inequality.
Assume that $C_{\alpha,p}^{\rm AB}$ is the best constant in the inequality
\begin{equation}\label{AB}\tag{AB}
    \int_{\R^2} |(-i\nabla-\alpha A)\psi|^2\,dx \geq C_{\alpha,p}^{\rm AB} \left( \int_{\R^2} \frac{|\psi|^p}{|x|^2} \,dx \right)^{2/p}
\end{equation}
for some parameters $\alpha$ and $p$, where
$$
A(x) = \frac1{|x|^2}\,\begin{pmatrix}
       x_2\\-x_1
    \end{pmatrix}
$$
is the Aharonov--Bohm vector potential.
This inequality valid for all sufficiently regular complex-valued functions $\psi$ on $\R^2$. The constant $C_{\alpha,p}^{\rm AB}$ was studied by Bonheure, Dolbeault, Esteban, Laptev and Loss in~\cite[Case $\lambda=0$]{bonheure_symmetry_2020}.
\begin{thm}\label{Thm:Main} Let $(\alpha,p)\in(0,1/2)\times(2,+\infty)$. Inequality \eqref{SCKN} is equivalent to~\eqref{AB} and
	$$
	C_{\alpha,p} = C_{\alpha,p}^{\rm AB} \,.
	$$
	Moreover, optimizers are related by $\phi^\#(s,\theta)=\psi^\#(s,\theta) \chi_0$ for some constant spinor $\chi_0\in\C^2$. As a consequence, there is symmetry in~\eqref{SCKN} and $\phi^\#$ does not depend on $\theta$ if and only if there is symmetry in~\eqref{AB} and $\psi^\#$ does not depend on $\theta$.
\end{thm}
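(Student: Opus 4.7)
The plan is to work throughout on the cylinder $\R\times\mathbb S^1$, using the equivalence of \eqref{SCKN} with \eqref{CKNSlog} and the analogous cylindrical form of \eqref{AB}, which (by a direct change of variables $s=-\log|x|$) reads
\[
\mathcal E[\psi]:=\int_\R\int_{\mathbb S^1}\bigl(|\partial_s\psi|^2+|(\alpha-i\partial_\theta)\psi|^2\bigr)\,ds\,d\theta\ge C_{\alpha,p}^{\rm AB}\Bigl(\int_\R\int_{\mathbb S^1}|\psi|^p\,ds\,d\theta\Bigr)^{2/p}.
\]
The crucial observation is that, since $\sigma_3=\operatorname{diag}(1,-1)$ is diagonal, the quadratic form on the left of \eqref{CKNSlog} \emph{decouples} on the components of a spinor $\phi=(\varphi_1,\varphi_2)^T$: using the identity $|(\alpha+i\partial_\theta)\varphi_2|^2=|(\alpha-i\partial_\theta)\overline{\varphi_2}|^2$ together with $|\partial_s\varphi_2|=|\partial_s\overline{\varphi_2}|$, one finds
\[
\int_\R\int_{\mathbb S^1}\bigl(|\partial_s\phi|^2+|(\alpha-i\sigma_3\partial_\theta)\phi|^2\bigr)\,ds\,d\theta=\mathcal E[\varphi_1]+\mathcal E[\overline{\varphi_2}].
\]

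Applying the cylindrical \eqref{AB} to each of $\varphi_1$ and $\overline{\varphi_2}$ and noting $|\overline{\varphi_2}|=|\varphi_2|$ bounds this from below by $C_{\alpha,p}^{\rm AB}\bigl(\|\varphi_1\|_p^2+\|\varphi_2\|_p^2\bigr)$. Since $p>2$, Minkowski's inequality at exponent $p/2\ge1$ applied to $|\varphi_1|^2,|\varphi_2|^2$ combines these two $L^p$-norms:
\[
\|\varphi_1\|_p^2+\|\varphi_2\|_p^2=\bigl\||\varphi_1|^2\bigr\|_{p/2}+\bigl\||\varphi_2|^2\bigr\|_{p/2}\ge\bigl\||\varphi_1|^2+|\varphi_2|^2\bigr\|_{p/2}=\|\phi\|_p^2,
\]
which gives $C_{\alpha,p}\ge C_{\alpha,p}^{\rm AB}$. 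For the matching upper bound I test \eqref{CKNSlog} with $\phi=\psi^\#\chi_0$, where $\chi_0=(1,0)^T$ and $\psi^\#$ is any \eqref{AB}-optimizer: the second component vanishes, $|\phi|=|\psi^\#|$, and the entire chain of inequalities collapses to equalities, so $C_{\alpha,p}\le C_{\alpha,p}^{\rm AB}$. This simultaneously exhibits an \eqref{SCKN}-optimizer of the claimed product form.

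Conversely, any \eqref{SCKN}-optimizer $\phi^\#$ must saturate both the componentwise \eqref{AB}-bound and Minkowski's inequality. The equality case of Minkowski at exponent $p/2>1$ forces $|\varphi_1|^2$ and $|\varphi_2|^2$ to be proportional, while saturation of \eqref{AB} makes each of $\varphi_1$ and $\overline{\varphi_2}$ a scalar \eqref{AB}-optimizer. Combining these two pieces of information with the classification of \eqref{AB}-optimizers from \cite{bonheure_symmetry_2020} (which, modulo the natural symmetries $s$-translation and global phase, determines the optimizer) allows one to realign the two components to a common scalar $\psi^\#$, yielding $\phi^\#=\psi^\#\chi_0$ for some constant $\chi_0\in\C^2$; the relation $|\phi^\#|=|\psi^\#|\,|\chi_0|$ then immediately gives the claimed equivalence of symmetry statements.

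The main obstacle, to my mind, is precisely this last step: extracting a genuine product factorization from the two pieces of information (proportional moduli from Minkowski, AB-optimality of each component) requires a uniqueness-type statement for \eqref{AB}-optimizers to align the phases of the two components, together with the $\sigma_1$-reflection symmetry $\phi\mapsto\sigma_1\overline\phi$ of \eqref{CKNSlog} to reduce configurations with $\varphi_1\equiv0$ to those with $\varphi_2\equiv0$. By contrast, the first half of the proof is essentially the decoupling identity plus one application each of the cylindrical \eqref{AB} inequality and Minkowski's inequality.
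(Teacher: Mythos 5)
Your argument takes essentially the same route as the paper: decouple the two spinor components using $|(\alpha+i\partial_\theta)\varphi_2|=|(\alpha-i\partial_\theta)\overline{\varphi_2}|$ (the paper does this in Euclidean coordinates via $|(\partial_1-i\partial_2)\varphi_2|=|(\partial_1+i\partial_2)\overline{\varphi_2}|$), apply the scalar/Aharonov--Bohm inequality to each component, recombine with Minkowski's inequality at exponent $p/2$, and obtain the reverse bound by testing with a one-component spinor. The only differences are organisational — you work on the cylinder from the outset whereas the paper decouples in $\R^2$ and then transports both the scalar CKN and the AB functionals to the cylinder in separate steps — and the step you flag as delicate (promoting proportionality of moduli plus componentwise AB-optimality to the product form $\phi^\#=\psi^\#\chi_0$) is treated with the same brevity in the paper itself.
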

As a consequence of Theorem~\ref{Thm:Main} and~\cite{bonheure_symmetry_2020}, we have the following result for \eqref{SCKN}.
\begin{corollary}\label{Cor:Main}
The following holds:
\begin{enumerate}
    \item For every $\alpha\in(0,1/2)$ and $p > 2$, $C_{\alpha,p}$ is positive, Inequality \eqref{SCKN} admits an optimizer, and $\lim_{\alpha\to0_+}C_{\alpha,p}=0$.
    \item There exists a region $S\subset (0,1/2)\times (2,\infty)$:
    \begin{equation*}
        S\subseteq \left\{(\alpha,p):\: p<\frac 2{|\alpha|}\sqrt{1-3\,\alpha^2}\right\}
    \end{equation*} 
    where all optimizers of \eqref{SCKN} are radially symmetric.
    \item There exists a region $B\subset (0,1/2)\times (2,\infty)$:
    \begin{equation*}
        B\subseteq \left\{(\alpha,p):\: 8\left(\sqrt{p^4-\alpha^2(p-2)^2(p+2)(3p-2)}+2\right)<\alpha^2 (p-2)^3(p+2)+4p(p+4) \right\}
    \end{equation*} where symmetry breaking occurs for \eqref{SCKN}.
    \item There exists a function $p\mapsto\alpha(p): (2,\infty)\to (0,1/2)$ such that symmetry holds if $\alpha\in\big(0,\alpha(p)\big]$ and there is symmetry breaking if $\alpha\in\big(\alpha(p),1/2\big)$.
    \item Denote by $\phi_*$ the optimizer of \eqref{CKNSlog} among all radially symmetric functions. The linear stability of \eqref{CKNSlog} around it is equivalent to the condition, that the operator $A$ acting on $\mathrm{H}^1(\mathbb{R})\times \mathrm{H}^1(\mathbb{R})$ and defined by
    \begin{equation}\label{A}
        A \coloneqq \begin{pmatrix}
        (-\partial_x^2+(1+\alpha)^2-\frac{p}{2}\,|\phi_*|^{p-2} ) &- \frac{p-2}{2}\,|\phi_*|^{p-2} \\
        - \frac{p-2}{2}\,|\phi_*|^{p-2} & (-\partial_x^2+(1-\alpha)^2-\frac{p}{2}\,|\phi_*|^{p-2} ) 
    \end{pmatrix}
    \end{equation}
    is positive semi-definite.
\end{enumerate}
\end{corollary}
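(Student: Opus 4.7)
\emph{Parts~(1)--(4)} follow directly from Theorem~\ref{Thm:Main} combined with the analysis of~\eqref{AB} carried out in~\cite{bonheure_symmetry_2020}. The equality $C_{\alpha,p}=C_{\alpha,p}^{\rm AB}$ and the correspondence of optimizers transport: existence of an optimizer, positivity of $C_{\alpha,p}$, and the limit $C_{\alpha,p}\to 0$ as $\alpha\to 0_+$ (part~(1)); the radial stability region~$S$, obtained in~\cite{bonheure_symmetry_2020} by the linear spectral analysis of the scalar radial minimizer of~\eqref{AB} restricted to the first non-radial Fourier mode (part~(2)); and the explicit symmetry breaking region~$B$ coming from a test-function argument there (part~(3)). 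Part~(4) combines the monotonicity in $\alpha$ of the stability region established in~\cite{bonheure_symmetry_2020} with the existence of~$B$ to produce a well-defined threshold function $\alpha(p)$.

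\medskip For \emph{part~(5)}, using Theorem~\ref{Thm:Main} I take the radial minimizer in the form $\phi_*(s,\theta)=\psi_*(s)\,(1,0)^T$, where $\psi_*>0$ is the scalar radial minimizer of~\eqref{AB} in cylindrical coordinates, normalized so that $-\psi_*''+\alpha^2\psi_*=\psi_*^{p-1}$ (this fixes the Lagrange multiplier at $\tilde\lambda=2/p$ for the problem $\min\{N[\phi]:\int|\phi|^p=1\}$ where $N[\phi]=\int(|\partial_s\phi|^2+|(\alpha-i\sigma_3\partial_\theta)\phi|^2)\,ds\,d\theta$). A direct computation of the Hessian then shows that linear stability of~\eqref{CKNSlog} at $\phi_*$ amounts to the non-negativity on all spinor perturbations $\eta=(\eta_1,\eta_2)^T$ of the quadratic form
\begin{equation*}
Q[\eta]=N[\eta]-\int\psi_*^{p-2}\bigl[(p-1)(\mathrm{Re}\,\eta_1)^2+(\mathrm{Im}\,\eta_1)^2+|\eta_2|^2\bigr]\,ds\,d\theta.
\end{equation*}
After Fourier expansion in $\theta$, perturbations $\eta=(0,g(s)e^{il\theta})^T$ contribute the scalar operator $-\partial_s^2+(\alpha-l)^2-\psi_*^{p-2}$, which is non-negative for every $l\in\Z$: the case $l=0$ is exactly the operator having $\psi_*$ as its positive ground state with eigenvalue zero, and for $l\ne 0$ and $\alpha\in(0,1/2)$ the shift $(\alpha-l)^2-\alpha^2=l(l-2\alpha)$ is strictly positive. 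Hence the transverse $\eta_2$-direction imposes no additional stability condition.

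\medskip The substantive content is the analysis of the $\eta_1$ perturbations, which reproduces the scalar~\eqref{AB} instability calculation. Writing $\eta_1=\eta_r+i\eta_i$ and expanding $\eta_r=a(s)\cos\theta+b(s)\sin\theta$, $\eta_i=c(s)\cos\theta+d(s)\sin\theta$ on the first non-radial mode~$k=1$, the magnetic cross term $2\alpha(\eta_r\partial_\theta\eta_i-\eta_i\partial_\theta\eta_r)$ produces a coupling $4\alpha(ad-bc)$ after integrating in $\theta$. Grouping $(a,d)$ and $(b,c)$ into separate pairs and exploiting the identity
\begin{equation*}
(1+\alpha^2)(a^2+d^2)+4\alpha ad=\tfrac12\bigl((1+\alpha)^2(a+d)^2+(1-\alpha)^2(a-d)^2\bigr),
\end{equation*}
the change of variables $u_1=(a+d)/\sqrt2$, $u_2=(a-d)/\sqrt2$ diagonalizes the kinetic part to $(u_1')^2+(u_2')^2+(1+\alpha)^2u_1^2+(1-\alpha)^2u_2^2$ and converts the potential term $\psi_*^{p-2}((p-1)a^2+d^2)$ into $\psi_*^{p-2}\bigl(\tfrac{p}{2}(u_1^2+u_2^2)+(p-2)u_1u_2\bigr)$. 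The resulting contribution to $Q$ is exactly $\int(u_1,u_2)\,A\,(u_1,u_2)^T\,ds$ with $A$ as in~\eqref{A}; the $(b,c)$-pair produces the same form by symmetry. Fourier modes $|k|\ge 2$ yield operators strictly dominating $A$ since $(k\pm\alpha)^2>(1\pm\alpha)^2$ in that range, and mode $k=0$ reduces to the scalar radial problem which is trivially non-negative by the ground-state property of $\psi_*$. Consequently, linear stability of~\eqref{CKNSlog} at $\phi_*$ is equivalent to $A\ge 0$.

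\medskip The main technical obstacle is the bookkeeping: tracking the Lagrange multiplier $\tilde\lambda=2/p$ and the various $\pi$ prefactors through the Fourier integration in $\theta$ so that the off-diagonal coefficient of $A$ comes out as exactly $(p-2)/2$ (a factor-of-two mistake is easy to make here). Conceptually the argument is transparent: it is the scalar~\eqref{AB} linear instability analysis on the first non-radial Fourier mode, augmented by the observation that the extra spinor direction $\eta_2$ only contributes non-negative modes and therefore does not constrain stability further.
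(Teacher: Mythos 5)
Your treatment of parts (1)--(4) is exactly what the paper intends: transport via Theorem~\ref{Thm:Main} from~\cite{bonheure_symmetry_2020}. For part~(5) the paper only gives a two-line sketch (Fourier decomposition, reduction to the spin-up $k=\pm1$ modes, variational argument), and your proposal fills in the same argument in essentially the same way. The identity diagonalizing $(1+\alpha^2)(a^2+d^2)+4\alpha ad$ via $u_1=(a+d)/\sqrt2$, $u_2=(a-d)/\sqrt2$, the recomputation of the potential $(p-1)a^2+d^2\mapsto\tfrac p2(u_1^2+u_2^2)+(p-2)u_1u_2$, the observation that the spin-down modes and $|k|\ge2$ spin-up modes strictly dominate $A$ --- all of this is correct and matches the intended structure.

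There is, however, one imprecision worth fixing. Your quadratic form $Q[\eta]$ omits the rank-one term
\begin{equation*}
-\,\frac{p-2}{\|\phi_*\|_{L^p}^{p}}\left(\iint |\phi_*|^{p-2}\,\Re\langle\phi_*,\eta\rangle\,ds\,d\vartheta\right)^{2}
\end{equation*}
that appears in the paper's second variation $\mathcal{Q}$ (it comes from differentiating $\mathcal{D}[\phi]^{2/p}$ twice). This term vanishes identically on every $k\neq0$ mode by angular orthogonality, so it does not affect your $k=\pm1$ computation; but on the $k=0$, real-part sector the operator $-\partial_s^2+\alpha^2-(p-1)\psi_*^{p-2}$ does have a negative eigenvalue (its kernel contains $\psi_*'$, which changes sign, so the ground state lies below zero), and $Q$ alone is \emph{not} non-negative there. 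What saves the $k=0$ mode is either the rank-one correction or, equivalently, the restriction to the constraint tangent space $\{\eta:\int\psi_*^{p-1}\Re\eta_1=0\}$, on which non-negativity follows from the optimality of $\psi_*$ among radial profiles --- not from the ``ground-state property of $\psi_*$,'' which only handles the operator $-\partial_s^2+\alpha^2-\psi_*^{p-2}$ (the imaginary-part/$\eta_2$ channel). So: either carry $\mathcal{Q}$ including the rank-one term, or state explicitly that you work modulo the normalization constraint; as written, the sentence ``linear stability amounts to $Q\ge0$'' and the dismissal of $k=0$ are both slightly off, even though the final equivalence with $A\ge0$ is correct.
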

\noindent These properties are straightforward consequences of Theorem~\ref{Thm:Main} except for (5) which is new. See Theorem~\ref{Equivalence for Lin Instab Theorem} below for details.
The results on symmetry and symmetry breaking regions are illustrated in Figure~\ref{fig:stability_regions}.
    
\medskip\noindent\emph{Summary of the numerical results}.
The operator $A$ defined by~\eqref{A} can be represented as an infinite dimensional matrix $M$ in a Gegenbauer polynomial basis. Via a finite dimensional truncation of the matrix $M$ and a numerical computation of the lowest eigenvalue of this truncation, we were able to obtain the following numerical results:
    \begin{itemize}
        \item The threshold between linear instability and linear instability lies strictly between the established regions of symmetry and symmetry breaking.
        \item The symmetry breaking region is larger than established in \cite{bonheure_symmetry_2020}.
    \end{itemize}
\noindent Beyond Theorem~\ref{Thm:Main}, these results are the main contribution of the current paper. See Figures \ref{fig:sign40stability_regions} and \ref{fig:sign40stability_regions} for illustration of typical computations and Figure \ref{fig:3dstability_regions} for a more detailed analysis of the values of lowest eigenvalue. 

\medskip
This paper is organized as follows. Section \ref{From 2D-spinor to Aharonov--Bohm} is devoted to the proof of Theorem~\ref{Thm:Main}. In Section \ref{Linear Instability Region} we consider linear instability results, prove that it implies symmetry breaking and establish Result (5) of Corollary~\ref{Cor:Main}. Section \ref{Reformulation in a Gegenbauer Polynomial Basis} is intended to a characterization of the spectrum of $A$ given by~\eqref{A} using a matrix in a Gegenbauer polynomial basis.
Building upon this result, we explain the method for our numerical results in Section \ref{Numerical Method} and discuss its robustness and some further results.

\section{From 2D-spinor to Aharonov--Bohm}\label{From 2D-spinor to Aharonov--Bohm}

This section is devoted to the proof of our main analytical result.

\begin{proof}[Proof of Theorem~\ref{Thm:Main}] We divide the proof in three steps.

\medskip\noindent$\bullet$
	\emph{Step 1.} Let us denote by $C_{\alpha,p}^{\rm scalar}$ the best constant in the inequality
	$$
	\int_{\R^2} \frac{|(\partial_1+i\partial_2)\psi|^2}{|x|^{2\alpha}}\,dx \geq C_{\alpha,\beta}^{\rm scalar} \left( \int_{\R^2} \frac{|\psi|^p}{|x|^{\beta p}} \,dx \right)^{2/p} \,,
	$$
	for \emph{scalar} (that is, $\C$-valued) fields on $\R^2$. We claim that
	$$
	C_{\alpha,p} = C_{\alpha,p}^{\rm scalar} \,.
	$$
	
	We write $\phi = (\varphi_1,\varphi_2)^{\rm T}$ with scalar fields $\varphi_1$ and $\varphi_2$. By definition of the Pauli matrices, we have
	$$
	\int_{\R^2} \frac{|\sigma\cdot\nabla\phi|^2}{|x|^{2\alpha}}\,dx = \int_{\R^2} \frac{|(\partial_1+i\partial_2)\varphi_1|^2+|(\partial_1-i\partial_2)\varphi_2|^2}{|x|^{2\alpha}}\,dx
	$$
	Noting that $|(\partial_1-i\partial_2)\varphi_2| = |(\partial_1+i\partial_2)\overline{\varphi_2}|$, we find that
	$$
	\int_{\R^2} \frac{|\sigma\cdot\nabla\phi|^2}{|x|^{2\alpha}}\,dx \geq C_{\alpha,\beta}^{\rm scalar} \left( \left( \int_{\R^2} \frac{|\varphi_1|^p}{|x|^{\beta p}} \,dx \right)^{2/p} + \left( \int_{\R^2} \frac{|\overline{\varphi_2}|^p}{|x|^{\beta p}} \,dx \right)^{2/p} \right).
	$$
	By the triangle inequality in $\mathrm L^{p/2}$, we have
	\[
	\left( \int_{\R^2} \frac{|\varphi_1|^p}{|x|^{\beta p}} \,dx \right)^{2/p} + \left( \int_{\R^2} \frac{|\overline{\varphi_2}|^p}{|x|^{\beta p}} \,dx \right)^{2/p}
		 \geq \left( \int_{\R^2} \frac{(|\varphi_1|^2 + |\overline{\varphi_2}|^2)^{p/2}}{|x|^{\beta p}} \,dx \right)^{2/p} 
		 = \left( \int_{\R^2} \frac{|\phi|^p}{|x|^{\beta p}} \,dx \right)^{2/p}.
		\]
	This proves that $C_{\alpha,\beta}\geq C_{\alpha,\beta}^{\rm scalar}$. The reverse inequality is trivial. 
	Equality in the triangle inequality implies that $\phi$ for $C_{\alpha,p}$ are necessarily of the form
	$$
	\phi = \psi \chi_0 \,.
	$$
	where $\chi_0 \in \C^2$ is a constant spinor.
	Since optimizers achieve equality everywhere, this shows that optimizers $\phi$ for $C_{\alpha,p}$ are necessarily of that form	where $\psi$ is an optimizer for $C_{\alpha,p}^{\rm scalar}$.
	
\medskip\noindent$\bullet$
\emph{Step 2.} For a spinor field $\psi$ on $\R^2$ in the natural Sobolev space associated to \eqref{SCKN} we introduce an Emden Fowler coordinate transfomation and define a transformed spinor field \mbox{$\phi \in \mathrm{H}^1(\R\times\Sph^1,\,\mathbb{C}^2)$} by
	$$
	\psi(r\cos\theta,r\sin\theta) = r^\alpha \phi(\ln r,\theta) \,.
	$$
	Then, taking the relation between $\alpha,\beta$ and $p$ into account,
	$$
	\int_{\R^2} \frac{|\psi|^p}{|x|^{\beta p}}\,dx = \iint_{\R\times\Sph^1} |\phi|^p \,ds\,d\theta \,.
	$$
	Moreover, 
	$$
	\int_{\R^2} \frac{|(\partial_1+i\partial_2)\psi|^2}{|x|^{2\alpha}}\,dx
	= \iint_{\R\times\Sph^1} |\partial_s \phi + \alpha\phi - i\partial_\theta\phi|^2\,ds\,d\theta \,.
	$$
	We claim that the right side is equal to
	$$
	\iint_{\R\times\Sph^1} \left( |\partial_s \phi|^2 + |(-i\partial_\theta +\alpha)\phi |^2 \right)ds\,d\theta \,.
	$$
	To see this, we expand
	$$
	|\partial_s \phi + \alpha\phi - i\partial_\theta\phi|^2 = |\partial_s\phi|^2 + |(-i\partial_\theta+\alpha)\phi|^2 + 2\re\langle \overline{\partial_s\phi},\ (-i\partial_\theta+\alpha)\phi\rangle_{\C^2}.
	$$
	We need to show that the integral of the mixed term vanishes for all \mbox{$\phi \in \mathrm{H}^1(\R\times\Sph^1,\,\mathbb{C}^2)$}. First, we observe that
	$$
	\re \int_\R \langle{\partial_s \phi},\ \phi\rangle_{\C^2}\,ds = \frac{1}{2} \re \int_\R \partial_s (|\phi|^2)\,ds = 0 \,.
	$$
	Next, by integrating by parts twice (once in $s$ and once in $\theta$), we find
	$$
	\iint_{\R\times\Sph^1} \langle{\partial_s \phi}, \ \partial_\theta\phi \rangle_{\C^2} \,ds\,d\theta = \iint_{\R\times\Sph^1} \langle {\partial_\theta\phi}, \ \partial_s\phi \rangle_{\C^2} \,ds\,d\theta \,.
	$$
	Multiplying by $-i$ and taking the real part gives
	\begin{align*}
		\re \iint_{\R\times\Sph^1} \langle{\partial_s \phi}, \ (-i\partial_\theta\phi)\rangle_{\C^2} \,ds\,d\theta & =  \re \iint_{\R\times\Sph^1} \ \langle{ (-i \partial_\theta\phi)}, \ \partial_s\phi  \rangle_{\C^2}\,ds\,d\theta \\
		& = - \re \iint_{\R\times\Sph^1} \langle{\partial_s \phi}, \ (-i\partial_\theta\phi)\rangle_{\C^2} \,ds\,d\theta \,.
	\end{align*}
	Thus, also the second mixed term vanishes.

	To summarize, we have shown that the $C_{\alpha,\beta}^{\rm scalar}$ is the best constant in the spinorial inequality
	$$
	\iint_{\R\times\Sph^1} \left( |\partial_s \phi|^2 + |(-i\partial_\theta +\alpha)\phi |^2 \right)ds\,d\theta \geq C_{\alpha,\beta} \left( \iint_{\R\times\Sph^1} |\phi|^p \,ds\,d\theta \right)^{2/p}.
	$$
	
\medskip\noindent$\bullet$
\emph{Step 3.} We finally show that the Aharonov--Bohm inequality can be brought into the same form as the inequality at the end of the previous step. This follows by similar computations as in the previous step. We assume that $\psi$ and $\tilde\phi$ are related by
	$$
	\psi(r\cos\theta,r\sin\theta) = \tilde\phi(\ln r,\theta) \,.
	$$
	Then
	$$
	\int_{\R^2} |(-i\nabla - \alpha A)\psi|^2\,dx = \iint_{\R\times\Sph^1} \left( |\partial_s\tilde\phi|^2 + |(-i\partial_\theta +\alpha)\tilde\phi|^2\right)ds\,d\theta
	$$
	and
	$$
	\int_{\R^2} \frac{|\psi|^p}{|x|^2}\,dx = \iint_{\R\times\Sph^1} |\tilde\phi|^2 \,ds\,d\theta \,.
	$$
	This completes the proof of the equivalence.		
\end{proof}

\section{Linear Instability Region}
\label{Linear Instability Region}

By considering linear perturbations of the optimizer of~\eqref{SCKN} restricted to symmetric spinors as, \emph{e.g.}, in~\cite{MR1973285}, we prove symmetry breaking if we can find a negative eigenvalue. Let us start with the inequality restricted to symmetric spinors.
\begin{lem}[Optimizer among Radially Symmetric Functions]
    The optimal function for \eqref{CKNSlog} among all symmetric functions $\phi \in \mathrm{H}^1(\mathbb{R}\times \mathbb{S}^1,\mathbb{C}^2) $ is given up to a multiplicative constant and translation by
    \begin{equation*}
        \phi_*(s) \coloneqq \left(\frac{p\:\alpha^2}{2}\right)^\frac{1}{p-2} \cdot \left( \cosh\left( \frac{p-2}{2} \alpha s\right) \right)^{-\frac{2}{p-2}} \cdot \chi
    \end{equation*}
    where $\chi \in \mathbb{C}^2$ is a constant unit spinor.
    Furthermore, the optimal constant among radially symmetric functions in $\mathrm{H}^1(\mathbb{R}\times \mathbb{S}^1,\mathbb{C}^2) $ is given by
    \begin{align*}
        (2\pi)^{\frac{p}{2}-1} C_{\alpha,p}^* = \| \phi_*(s)\|_{L^p(\mathbb{R})}^{p-2}.
    \end{align*}
    Since the set of radially symmetric functions is a subset smaller than the whole set it trivially holds that $C_{\alpha,p}\leq C_{\alpha,p}^*$.
    Furthermore, the absolute value of $\phi_*$
    \begin{equation*}
        \varphi_*(s) \coloneqq|\phi_*(s)|= \left(\frac{p\:\alpha^2}{2}\right)^\frac{1}{p-2} \cdot \left( \cosh\left( \frac{p-2}{2} \alpha s\right) \right)^{-\frac{2}{p-2}}
    \end{equation*}
    is the unique (up to translations) solution to the non linear Schrödinger type one dimensional differential equation
    \begin{equation*}
        -\partial_s^2 \varphi_*(s)+ \alpha^2 \varphi_*(s)- \varphi_*(s)^{p-1}=0\,.
    \end{equation*}
\end{lem}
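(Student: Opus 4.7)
The plan is to reduce the cylinder inequality~\eqref{CKNSlog}, restricted to symmetric spinors, to a standard one-dimensional scalar Gagliardo--Nirenberg problem. A symmetric spinor is by definition a pure eigenfunction of $-i\sigma_3\partial_\theta$: it can be written as $\phi(s,\theta) = \mathrm{diag}(e^{-ik\theta}, e^{ik\theta})\,\tilde\phi(s)$ for some $k \in \Z$ and some $\tilde\phi : \R \to \C^2$. A direct computation then gives $(\alpha - i\sigma_3\partial_\theta)\phi = (\alpha - k)\phi$, and integrating over $\theta \in \S^1$ reduces~\eqref{CKNSlog}, restricted to such $\phi$, to the 1D spinorial inequality
\[
\int_\R \bigl(|\partial_s \tilde\phi|^2 + (\alpha - k)^2\,|\tilde\phi|^2\bigr)\,ds \;\geq\; c\,\left(\int_\R |\tilde\phi|^p\,ds\right)^{2/p},
\]
modulo an explicit power of $2\pi$ coming from the $\theta$-integration.

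Next, I would apply verbatim the triangle-inequality argument from Step~1 of the proof of Theorem~\ref{Thm:Main} (now carried out on $\R$ instead of $\R^2$). This reduces the spinorial 1D inequality to its scalar counterpart and shows that every optimizer is necessarily of the factored form $\tilde\phi(s) = u(s)\,\chi_0$ with $\chi_0 \in \C^2$ a constant unit spinor and $u \geq 0$ an optimizer of the scalar 1D Gagliardo--Nirenberg inequality with mass $\mu := |\alpha - k|$. For this scalar problem it is classical that, up to translation, the unique positive $\mathrm{H}^1(\R)$-optimizer is the explicit soliton $u_\mu(s) = (p\mu^2/2)^{1/(p-2)}\,\cosh\bigl((p-2)\mu s/2\bigr)^{-2/(p-2)}$, which coincides with the unique positive solution of $-u'' + \mu^2 u - u^{p-1} = 0$; this can be established via symmetric-decreasing rearrangement combined with ODE uniqueness, or directly from the first-integral relation $(u')^2 = \mu^2 u^2 - (2/p)\,u^p$.

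Finally, I would minimize over $k \in \Z$. Multiplying the soliton equation by $u_\mu$ and integrating gives the Pohozaev-type identity $\int \bigl((u'_\mu)^2 + \mu^2 u_\mu^2\bigr)\,ds = \int u_\mu^p\,ds$, so the scalar optimal constant equals $c(\mu) = \bigl(\int u_\mu^p\,ds\bigr)^{(p-2)/p}$. The scaling $u_\mu(s) = \mu^{2/(p-2)}\,u_1(\mu s)$ then yields $c(\mu) = \mu^{(p+2)/p}\,c(1)$, which is strictly increasing in $\mu$. Since $\alpha \in (0, 1/2)$, the minimum of $|\alpha - k|$ over $k \in \Z$ equals $\alpha$ and is attained uniquely at $k = 0$; hence the optimal symmetric spinor is $\theta$-independent, given by $\phi_*(s) = \varphi_*(s)\,\chi$ with $\varphi_* = u_\alpha$. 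The closed-form expression for $C_{\alpha,p}^*$ then follows by evaluating the cylinder functional at $\phi_*$ using the Pohozaev identity.

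The main obstacle is essentially bookkeeping; the only nontrivial ingredient is the well-known uniqueness and explicit form of the 1D ground-state soliton, which I would quote as a known fact rather than rederive. A minor subtlety worth explicitly verifying is the strict monotonicity of $c(\mu)$, since it is what singles out $k=0$ as the minimizing angular-momentum sector, a feature specific to the reduced parameter range $\alpha \in (0, 1/2)$.
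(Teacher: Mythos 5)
Your proposal is correct and, modulo one important detail, takes the same route as the paper, which simply cites Nagy's classical 1D Gagliardo--Nirenberg result (and the appendix of \cite{dolbeault_ckn_2025}) without giving any argument. What you do is spell out the reduction that the paper leaves implicit: (i)~write a symmetric spinor in the form $\phi(s,\theta)=\mathrm{diag}(e^{-ik\theta},e^{ik\theta})\,\tilde\phi(s)$, observe that $(\alpha-i\sigma_3\partial_\theta)\phi=(\alpha-k)\phi$, and integrate out $\theta$; (ii)~use the triangle inequality in $L^{p/2}$ (verbatim as in Step~1 of the proof of Theorem~\ref{Thm:Main}) to pass from a $\C^2$-valued 1D problem to the scalar one; (iii)~quote the uniqueness of Nagy's soliton $u_\mu$; and (iv)~minimize over $k\in\Z$. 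Step~(iv) is genuinely needed: without the strict monotonicity $c(\mu)=\mu^{(p+2)/p}c(1)$ and the observation that $\min_k|\alpha-k|=\alpha$ for $\alpha\in(0,1/2)$ is attained at $k=0$, the lemma's claim that the optimal \emph{symmetric} spinor is $\theta$-independent would not follow, and the paper's two-line proof silently relies on exactly this together with the parameter reduction from Section~\ref{Sec:Intro}. So your argument is both correct and more complete than the paper's.

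One cosmetic remark, not a gap in your work: the displayed normalization in the lemma statement, $(2\pi)^{p/2-1}C_{\alpha,p}^*=\|\phi_*\|_{L^p(\R)}^{p-2}$, does not match the computation you outline. With the unnormalized measure $d\theta$ on $\S^1$ one gets $C_{\alpha,p}^*=(2\pi)^{1-2/p}\|\varphi_*\|_{L^p(\R)}^{p-2}$, i.e.\ the exponent should be $2/p-1$, while with the normalized measure $d\vartheta=d\theta/(2\pi)$ (used elsewhere in the paper) the factor is absent altogether. This looks like a typo in the paper and you were right to leave it as an unspecified power of $2\pi$.
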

\begin{proof}
    The proof goes back to Nagy \cite{MR4277}. An accessible introduction taylored to our use case can be found in~\cite[Appendix A]{dolbeault_ckn_2025}.
\end{proof}

    The \emph{deficit functional} with the optimal radial constant for functions $\phi \in \mathrm{H}^1(\mathbb{R}\times \mathbb{S}^1,\mathbb{C}^2)$ is defined as
    \begin{align*}
        \mathcal{F}[\phi]\coloneqq \underbrace{\int_{\mathbb{R}}\int_{\mathbb{S}^1} \left|\partial_s \phi(s,\theta)\right|^2+\left|(\alpha -i\sigma_3 \partial_\theta) \phi(s,\theta) \right|^2 ds d\theta}_{\eqqcolon \mathcal{A}[\phi]} - C_{\alpha,p}^* \underbrace{ \left(\int_{\mathbb{R}}\int_{\mathbb{S}^1}|\phi(s,\theta)|^p ds d\theta\right)^\frac{2}{p}}_{\eqqcolon \mathcal{D}[\phi]}\,.
    \end{align*}
    The quadratic form of the linearized problem for a $\varphi \in \mathrm{H}^1(\mathbb{R}\times \mathbb{S}^1,\mathbb{C}^2)$ is defined as
    \begin{equation*}
    \mathcal{Q}[\varphi]\coloneqq \lim_{\varepsilon\to 0} \frac{1}{\varepsilon^2} \left(\mathscr{F}[\phi_*+\varepsilon\varphi]-\mathscr{F}[\phi_*]\right).
\end{equation*}

\noindent
By Taylor expansion up until second order in $\varepsilon$ we report that
\begin{multline*}
    \mathcal{Q}[\varphi]= \norm{\partial_s \varphi}^2_{L^2(\mathbb{R}\times \mathbb{S}^1)}+\norm{(-i\sigma_3\partial_\theta +\alpha) \varphi}_{L^2(\mathbb{R}\times \mathbb{S}^1)}^2\\
    -\Bigg[ \iint_{\mathbb{R}\times \mathbb{S}^1} |\phi_*|^{p-2} |\varphi|^2dsd\vartheta + (p-2) \iint_{\mathbb{R}\times \mathbb{S}^1} |\phi_*|^{p-4} |\Re(\langle \phi_*,\varphi \rangle )|^2 ds d\vartheta \\
    -\frac{p-2}{\norm{\phi_*}_{L^p(\mathbb{R}\times \mathbb{S}^1)}^p} \left( \iint_{\mathbb{R}\times \mathbb{S}^1} |\phi_*|^{p-2} \Re(\langle \phi_*,\varphi \rangle ) ds d\vartheta\right)^2 \Bigg]
\end{multline*}
using the normalized measure $d\vartheta$ on the circle in all norms.
If we obtain for some spinor \mbox{$\varphi \in \mathrm{H}^1(\mathbb{R}\times \mathbb{S}^1,\mathbb{C}^2)$}  that $\mathcal{Q}[\varphi]<0$, we shall say that \emph{the linearized problem linearly is unstable} around the radially symmetric optimizer.
\begin{lem}
If the linearized problem linearly is unstable around the radially symmetric optimizer, then the optimal function of \eqref{CKNSlog} is not radially symmetric.
\end{lem}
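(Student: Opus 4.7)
The plan is to argue by contrapositive. I would assume that the optimal function of \eqref{CKNSlog} is radially symmetric and then deduce $\mathcal{Q}[\varphi] \geq 0$ for every admissible $\varphi$, which contradicts linear instability.

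The key observation is that the deficit $\mathcal{F}$ is built with the \emph{radial} optimal constant $C_{\alpha,p}^*$, not with the global constant $C_{\alpha,p}$. If the global optimizer is radial, then $C_{\alpha,p} = C_{\alpha,p}^*$, so \eqref{CKNSlog} used with this constant yields $\mathcal{F}[\phi] \geq 0$ for every $\phi \in \mathrm{H}^1(\mathbb{R}\times \mathbb{S}^1, \mathbb{C}^2)$. Since $\phi_*$ realizes equality by definition of $C_{\alpha,p}^*$, we also have $\mathcal{F}[\phi_*] = 0$, so $\phi_*$ is a global minimizer of $\mathcal{F}$ on $\mathrm{H}^1(\mathbb{R}\times \mathbb{S}^1, \mathbb{C}^2)$.

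For an arbitrary test spinor $\varphi$, minimality yields $\mathcal{F}[\phi_* + \varepsilon \varphi] - \mathcal{F}[\phi_*] \geq 0$ for every $\varepsilon \in \mathbb{R}$. Dividing by $\varepsilon^2$ and passing to the limit $\varepsilon \to 0$ in the definition of $\mathcal{Q}[\varphi]$ gives $\mathcal{Q}[\varphi] \geq 0$. Equivalently, if $\mathcal{Q}[\varphi_0] < 0$ for some $\varphi_0$, then $\mathcal{F}[\phi_* + \varepsilon \varphi_0] < 0$ for all sufficiently small nonzero $\varepsilon$, contradicting $\mathcal{F} \geq 0$; this rules out radial symmetry of the optimizer.

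The only point calling for some care is the existence of the limit defining $\mathcal{Q}[\varphi]$: a Taylor expansion of $\mathcal{F}[\phi_* + \varepsilon \varphi]$ at $\varepsilon = 0$ produces a term linear in $\varepsilon$ whose vanishing for every $\varphi$ is equivalent to $\phi_*$ being a critical point of $\mathcal{F}$. This criticality follows from the preceding lemma, which furnishes the Euler-Lagrange equation $-\partial_s^2 \varphi_* + \alpha^2 \varphi_* - \varphi_*^{p-1} = 0$ for $\varphi_* = |\phi_*|$ (and hence, since $\phi_* = \varphi_* \chi$ with $\chi$ a constant unit spinor, also for $\phi_*$ with $\varphi_*^{p-1}$ replaced by $|\phi_*|^{p-2} \phi_*$). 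With this linearization step secured, the soft argument above concludes the proof, so I do not expect any substantial obstacle beyond bookkeeping the normalization constant in $C_{\alpha,p}^*$.
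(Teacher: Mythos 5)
Your proof is correct and is essentially the contrapositive of the paper's argument: the paper assumes $\mathcal{Q}[\varphi]<0$, uses the definition of the limit to produce a small $\varepsilon$ with $\mathcal{F}[\phi_*+\varepsilon\varphi]<0$, and concludes $C_{\alpha,p}<C_{\alpha,p}^*$, hence no radial optimizer; you instead assume a radial global optimizer, deduce $C_{\alpha,p}=C_{\alpha,p}^*$ so that $\mathcal{F}\ge0$ with $\phi_*$ a global minimizer, and then invoke the second-order necessary condition to get $\mathcal{Q}\ge0$. Both routes rest on the same observation, namely that the deficit $\mathcal{F}$ is built with $C_{\alpha,p}^*$ and that linear instability forces $\mathcal{F}$ negative near $\phi_*$, so the approaches match.
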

\begin{proof}
    Since $\phi_*$ is the radial optimizer, we know by definition that $\mathcal{F}[\phi_*]=0$.
    By our assumption of linear instability we can find a small $\varepsilon>0$ such that $$ \mathcal{F}[\phi_*+\varepsilon \varphi]-\underbrace{\mathcal{F}[\phi_*]}_{=0}= \mathcal{A}[\phi_*+\varepsilon \varphi]- C_{\alpha,p}^* \cdot \mathcal{D}[\phi_*+\varepsilon \varphi] <0\,.$$
    Since $\phi_*+\varepsilon \varphi \in \mathrm{H}^1(\mathbb{R}\times \mathbb{S}^1,\mathbb{C}^2)$, the \eqref{CKNSlog} holds, which yields a contradiction. The optimal constant must therefore be smaller than the optimal radial constant. Hence, the radial optimizer cannot be the global optimizer.
\end{proof}

A goal is to identify the whole region in parameter space $(\alpha,p)$ for which the quadratic form $\mathcal{Q}$ is not positive semi-definite, \emph{i.e.}, to characterize explicitly the region of linear instability for $\mathcal{Q}$. For this we establish the following result:
\begin{thm}
\label{Equivalence for Lin Instab Theorem}
    The parameter region $(\alpha,p)$ of linear instability of $\mathcal{Q}$ is equivalent to the parameter region $(\alpha,p)$ for which the operator $A$ defined by~\eqref{A} is not positive semi-definite.
\end{thm}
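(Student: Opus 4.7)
The plan is to diagonalize $\mathcal{Q}$ with respect to the angular variable $\theta$ via Fourier expansion, and then to identify the single block in which $A$ lives. The radial minimizer is $\phi_*=\varphi_*\chi_0$ for any unit $\chi_0\in\mathbb{C}^2$; by gauge freedom I take $\chi_0=(1,0)^T$, observing that the instability status of $\mathcal{Q}$ does not depend on this choice. Writing $\varphi=(f,g)^T$ and expanding $f(s,\theta)=\sum_k f_k(s)\,e^{ik\theta}$ and $g(s,\theta)=\sum_k g_k(s)\,e^{ik\theta}$, the kinetic plus angular contribution splits as $\sum_k\{|f_k'|^2+(k+\alpha)^2|f_k|^2\}+\sum_k\{|g_k'|^2+(k-\alpha)^2|g_k|^2\}$ and the potential $\varphi_*^{p-2}|\varphi|^2$ also diagonalizes. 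With the choice $\chi_0=(1,0)^T$, the cross term reduces to $(p-2)\varphi_*^{p-2}(\Re f)^2$, and the identity $\int(\Re f)^2\,d\vartheta=\tfrac14\sum_k|f_k+\bar f_{-k}|^2$ shows that it couples $f_k$ only with $f_{-k}$. The Lagrange multiplier term involves only the zero mode $f_0$. In particular, the $f$-sector and the $g$-sector decouple completely.

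For each $k\in\mathbb{Z}$ I then pair the modes $\pm k$ of $f$ via the substitution $u=f_k$, $v=\bar f_{-k}$. A direct expansion reveals that the resulting $2\times 2$-block equals $\langle (u,v),A_k(u,v)\rangle$, where $A_k$ is obtained from $A$ in~\eqref{A} by replacing $(1\pm\alpha)^2$ with $(k\pm\alpha)^2$; in particular $A_1=A$. Taking $f_1=u$, $f_{-1}=\bar v$ with $g\equiv 0$ and all other $f_k\equiv 0$ makes the Lagrange term vanish, and hence $\mathcal{Q}[\varphi]=\langle (u,v),A(u,v)\rangle$. This yields at once that $A\not\geq 0$ implies linear instability of $\mathcal{Q}$.

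For the converse, it suffices to show that whenever $A\geq 0$ every other block is also $\geq 0$. Each $g_k$-block reduces to the scalar operator $-\partial_s^2+(k-\alpha)^2-\varphi_*^{p-2}$; since $\varphi_*>0$ lies in the kernel of $-\partial_s^2+\alpha^2-\varphi_*^{p-2}$ by the Euler--Lagrange equation $-\varphi_*''+\alpha^2\varphi_*=\varphi_*^{p-1}$, this operator is positive semi-definite as a ground-state operator, and $(k-\alpha)^2\geq\alpha^2$ for all $k\in\mathbb{Z}$ and $\alpha\in(0,1/2)$, so every $g$-block is non-negative. The $f_0$-block together with its Lagrange term is $\geq 0$: the full radial quadratic form on $(f_0,g_0)$ is $\geq 0$ since $\phi_*$ is the radial minimizer, and, by the $f/g$ decoupling, testing with $g_0=0$ isolates the $f_0$-contribution. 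Finally, for $|k|\geq 2$, $A_k-A=\mathrm{diag}\bigl((k+\alpha)^2-(1+\alpha)^2,\,(k-\alpha)^2-(1-\alpha)^2\bigr)\geq 0$, hence $A\geq 0\Rightarrow A_k\geq 0$.

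The main technical step is the bookkeeping of the cross term, verifying that the pairing $u=f_k$, $v=\bar f_{-k}$ reproduces exactly the diagonal coefficient $-p/2$ and the off-diagonal coefficient $-(p-2)/2$ of $A$, with no spurious mixing into the $g$-sector. The clean decoupling of $g$ from $f$ rests on the initial reduction $\chi_0=(1,0)^T$, which must be justified at the outset as a gauge choice among the $\mathbb{S}^3$-family of radial minimizers; once this is in place, the argument is a systematic Fourier decomposition and a mode-by-mode lower bound.
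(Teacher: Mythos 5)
Your proof is correct and follows the same route as the paper's (very brief) sketch: angular Fourier decomposition to reduce to $2\times2$ blocks indexed by $k$, identification of the $k=\pm 1$ pair of the spin-up component with the operator $A$ in~\eqref{A}, and a mode-by-mode comparison ($g$-blocks via the ground-state property of $\varphi_*$ together with $(k-\alpha)^2\ge\alpha^2$, the $k=0$ block via radial optimality, and $A_k\ge A_1=A$ for $k\ge 2$). One caveat worth spelling out, which you yourself flag: the reduction to $\chi_0=(1,0)^T$ is not a pure gauge choice, since $\mathcal{F}$ is invariant only under the diagonal $U(1)\times U(1)$ and not under full $SU(2)$ rotations of the spinor, so the $\chi$-independence of the sign of $\mathcal{Q}$ is a genuine (if elementary) lemma — it can be verified by an orthogonal change of variables in each $4\times4$ angular block $(f_k,\bar f_{-k},g_k,\bar g_{-k})$, after which the block always splits into the two $\chi$-independent $2\times2$ pieces $\bigl(\begin{smallmatrix} E_k-(p-2)\varphi_*^{p-2} & 2\alpha\\ 2\alpha & E_k\end{smallmatrix}\bigr)$ and $\bigl(\begin{smallmatrix} E_k & -2\alpha\\ -2\alpha & E_k\end{smallmatrix}\bigr)$ with $E_k=-\partial_s^2+k^2+\alpha^2-\varphi_*^{p-2}$.
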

\begin{proof}[Sketch of the Proof]
    The proof relies on an $\mathrm{L}^2$ spherical harmonics decomposition. One can show, that the only relevant directions for linear instability is the contribution of the spin-up component with angular momentum equal to $1$ and the contribution of the spin-up component with angular momentum equal to $-1$. The result then follows from a variational argument.
\end{proof}

We use the following two test functions to test for linear instability and therefore to obtain symmetry breaking regions:
\begin{itemize}
    \item $\phi_1(s,\theta)= \varphi_1(s) \cdot \begin{pmatrix}
        0\\ 1
    \end{pmatrix}$
    \item $\phi_2(s,\theta)= \varphi_2(s) \cdot \begin{pmatrix}
        \sqrt{1-t^2}   \\ t
    \end{pmatrix}$.
\end{itemize}
Transformed back into nonradial functions $\phi \in \mathrm{H}^1(\mathbb{R}\times \mathbb{S}^1,\mathbb{C}^2)$ they correspond to the following:
\begin{itemize}
    \item $\phi_1(s,\theta)= \varphi_1(s) \cdot \begin{pmatrix}
        e^{-i\theta}\\ 0
    \end{pmatrix}$
    \item $\phi_2(s,\theta)= \varphi_2(s) \cdot \begin{pmatrix}
        t \cdot e^{i\theta}+ \sqrt{1-t^2} \cdot e^{-i\theta}\\ 0
    \end{pmatrix}$ for a $t\in[0,1]$ which will be optimzed over later.
\end{itemize}
The above two test functions are only informed guesses for the direction $\varphi$ in which the quadratic form $\mathcal{Q}$ is the lowest. They do not seem to be optimal.
\begin{figure}[ht]
\begin{center}
\includegraphics[width=8cm]{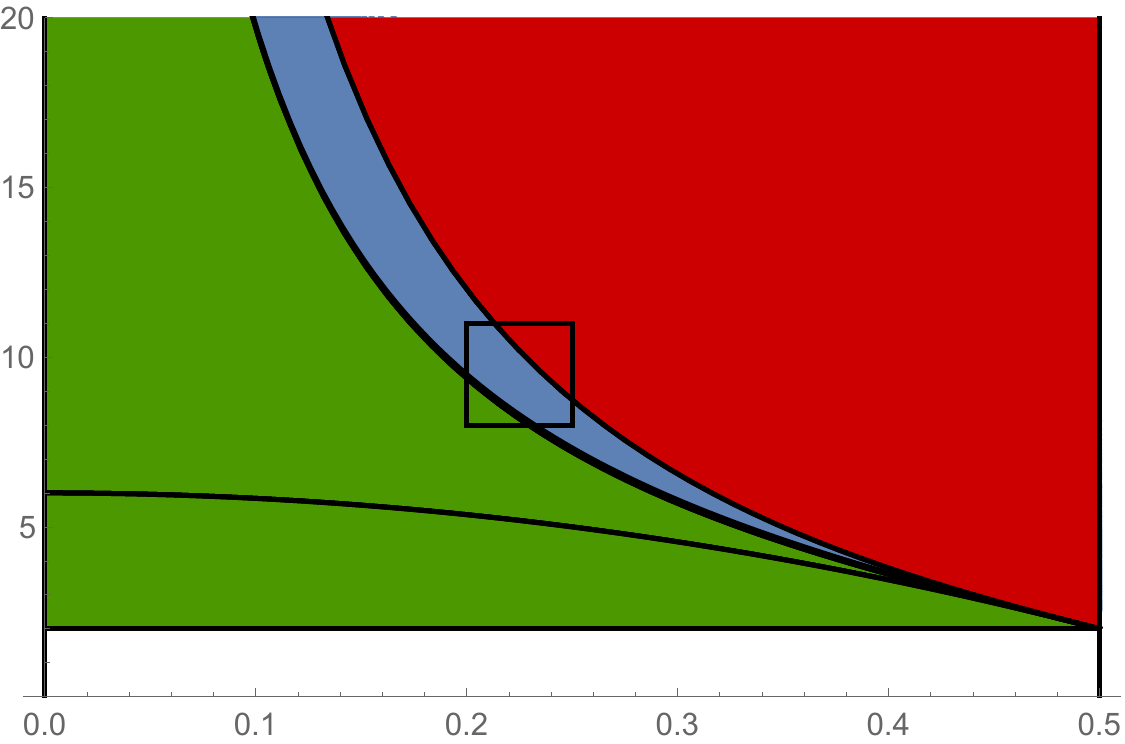}\hspace*{1cm}\includegraphics[width=6cm]{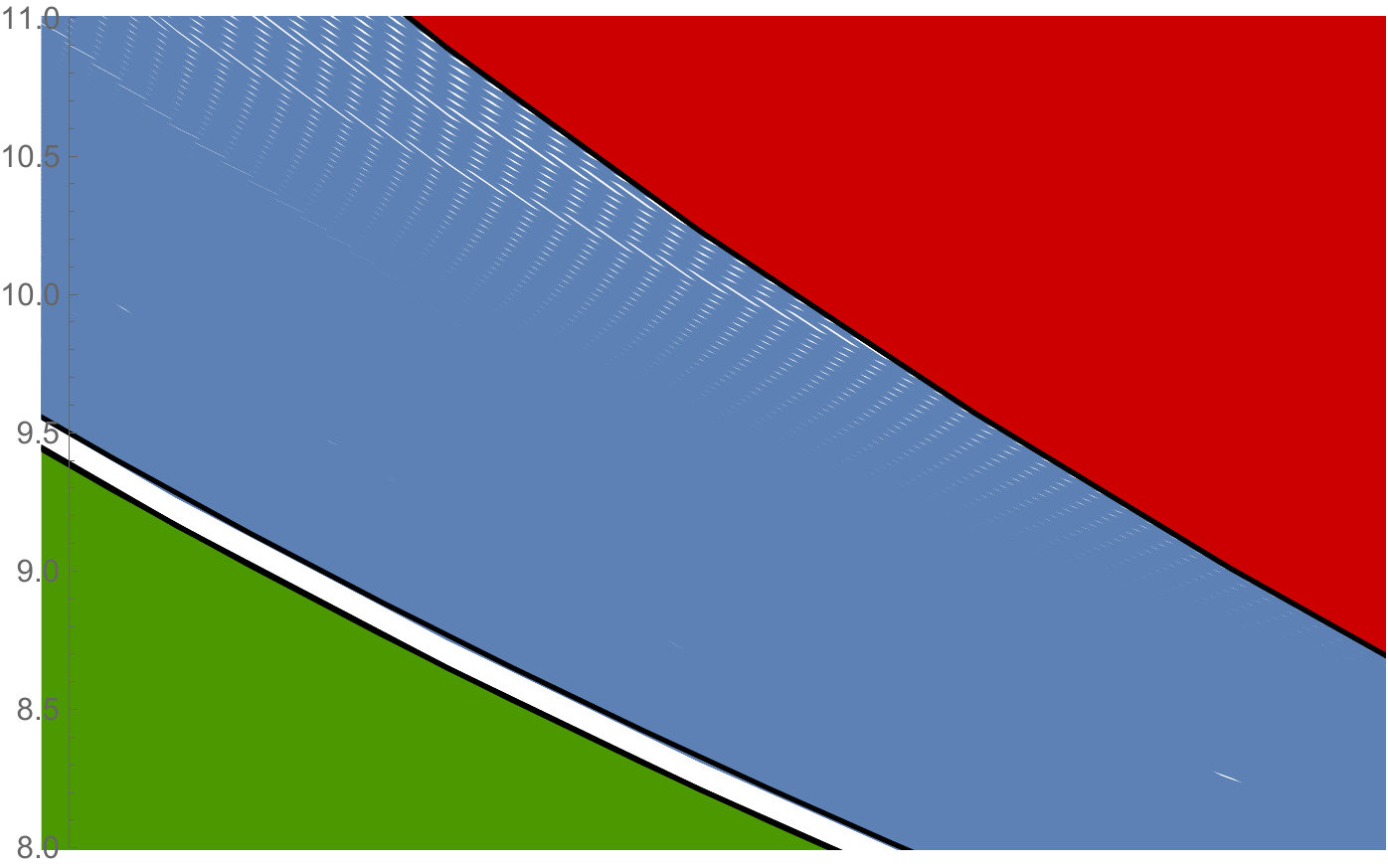}
\end{center}
\caption{\small\emph{Symmetry (green, established in \cite{bonheure_symmetry_2020}) and symmetry breaking (red, blue) regions in the $(\alpha,p)$ representation, \hbox{$\alpha \in \left(0,1/2\right),\, p \in (2,\infty)$.} The range in which symmetry \emph{versus} symmetry breaking is not decided is the tiny white gap shown in the enlargement (right) corresponding to the black rectangle (left). It is known from \cite{bonheure_symmetry_2020} that there exists one seperating function $\alpha(p)$ seperating the two regions. No regularity for this function is known.
The red and blue regions correspond to the two different Ansätze used to test the linearized problem. The black line seemingly bounding the blue region is the region of linear instability given in~\cite{bonheure_symmetry_2020} uses a different Ansatz for the linearized problem, which is apparently equivalent to ours.}}
\label{fig:stability_regions}
\end{figure}
In both cases $i=1,2$, the quadratic form associated to $A$ takes the form $\langle \phi_i,A\phi_i \rangle_{\mathrm{L}^2(\mathbb{R},\mathbb{C}^2)} = \langle \varphi_i, P_i \varphi_i \rangle_{\mathrm{L}^2(\mathbb{R},\mathbb{C})} $ for a one dimensional Pöschl-Teller operator $P_i$, for which the lowest eigenvalue is explicitly known (see for example p.74 of \cite{Landau-Lifschitz-67} or \cite{poschl1933bemerkungen}).
For $i=1$ we choose $\varphi_i(s)$ to be the eigenfunction to the lowest eigenvalue of the corresponding Pöschl-Teller operator $P_1$ and obtain as a condition for the non-negativity of the lowest eigenvalue
\begin{equation*}
    p\leq \frac{\sqrt{9\,\alpha^2-14\,\alpha+5}-\alpha +1}\alpha\,.
\end{equation*}
The region of symmetry breaking established in this way is colored in red in Figure \ref{fig:stability_regions}. For $i=2$ we take $\varphi_2(s)$ to be the eigenfunction to the lowest eigenvalue of the corresponding Pöschl-Teller operator $P_2$. This operator and therefore also the eigenfunction and the corresponding eigenvalue are now dependent on $t$. As a condition on the positivity of the lowest eigenvalue we obtain:
\begin{equation*}
    \frac{1}{16}\left(2\,\alpha+\sqrt{\alpha^2\left(4\,p^2+8\,p\,(p-2)\,t\,\sqrt{1-t^2}+(p-2)^2\right)}-\alpha\,p\right)^2 \leq  \alpha ^2+\alpha\left(4\,t^2-2\right)+1\,.
\end{equation*}
One can now plot the curves corresponding to different values of $t\in[0,1]$. The region of symmetry breaking obtained in this way are colored in blue in \ref{fig:stability_regions}. 
In \cite{bonheure_symmetry_2020} the authors establish symmetry breaking via a visually, but perhaps not essentially different Ansatz. 
The threshold from their paper seems to be the envelope of the blue region.
The threshold is plotted in Figure \ref{fig:stability_regions} as the black line at the interface between the blue and white region. The numerics suggest the conjecture, that the Ansatz used in \cite{bonheure_symmetry_2020} to obtain linear instability is essentially equivalent to the Ansatz $\phi_2$ used above to obtain the blue region.

\section{Reformulation in a Gegenbauer Polynomial Basis}
\label{Reformulation in a Gegenbauer Polynomial Basis}

The goal of this section is to lay the theoretical groundwork to make the question of positive semi-definiteness of $A$ defined in \ref{Equivalence for Lin Instab Theorem} amenable to numerical analysis.
We find two avenues of approach to a numerical spectral analysis of $A$. One would be a Birmann-Schwinger approach as done for example in \cite{dolbeault2023kellerliebthirringestimateseigenvalues}. We are currently implementing this.
The second one relies on reformulating the differential operator $A$ as an infinite dimensional matrix in a basis of special functions. For this we use Gegenbauer polynomials as follows:
\\
The differential operator $A$ from Theorem \ref{Equivalence for Lin Instab Theorem} can be identified with a $2\times 2$ block matrix $M= \begin{pmatrix}
    M_1 & M_2\\
    M_3 & M_4
\end{pmatrix} $ where $M_i \in \mathbb{R}^{\mathbb{N}_0\times \mathbb{N}_0}$.
First define the numbers:
\begin{align*}
    \eta_k^1 &\coloneqq\frac{1}{2(k+\lambda)}\left( \frac{(k+1)(k+2\lambda))}{2(k+1+\lambda)}+\frac{(k-1+2\lambda)k}{2(k-1+\lambda)}\right)\,,\\[4pt]
    \eta_k^2&\coloneqq \frac{(k+1)(k+2)}{4(k+\lambda)(k+1+\lambda)}\,,\\[4pt]
    \eta_k^0 &\coloneqq \frac{(k-1+2\lambda)(k-2+2\lambda)}{4(k+\lambda)(k-1+\lambda)}\,.
\end{align*}

Define the following matrices (infinite dimensional) written in the Gegenbauer polynomial basis $\left(C^\lambda_k(z)\right)_{k\in\mathbb{N}_0} $ (using the definitions $\lambda=\frac{n-3}{2}$ and $n:=\frac{2\,p}{p-2}$) :
\begin{align*}
    G &\coloneqq \begin{pmatrix} 1-\eta_0^1 & 0 & -\eta_2^0 & 0& \dots & \\
    0 & 1-\eta_1^1 &0 & -\eta_3^0 & 0 & \dots \\
    -\eta^2_0 & 0 & 1-\eta_2^1 &0 & -\eta_4^0 &  0& \dots \\
    0&-\eta^2_1 & 0 & 1-\eta_3^1 &0 & -\eta_5^0 &  0& \dots\\
    \vdots & \ddots & \ddots & 0 & \ddots & 0 & \ddots &0 & \dots       
    \end{pmatrix}= (1-z^2)\,,\\
    A &\coloneqq \begin{pmatrix}
        0 & 0 & 2\lambda & 0 & 2\lambda & 0 &  \hdots \\
        0 & 1 & 0& (2+2\lambda) & 0 & (2+2\lambda) & \hdots \\
        0& 0& 2 & 0 & (4+2\lambda) & 0  & (4+2\lambda) & \ddots \\
        0 & \vdots & 0 & \ddots & 0 & \ddots & 0 &\ddots
    \end{pmatrix} = z\,\frac{d}{dz}\,,
    \end{align*}
 the diagonalized ultraspherical operator for parameter $\lambda$
 \begin{align*}
  B &\coloneqq \text{diag}(k(k+2\lambda))= \begin{pmatrix}
        0 & 0&0 & 0 & \dots\\
        0 & (1+2\lambda) &0 & 0& \dots \\
        0& 0 & 2(2+2\lambda) &0 & \dots \\
        0& 0& 0 & 3(3+2\lambda) &0 & \dots \\
        \vdots & \vdots & \ddots &\ddots & \ddots & \ddots
    \end{pmatrix}\,,
     \end{align*}
and
\[
    N \coloneqq \text{diag}\left(\frac{\pi\: 2^{1-2\lambda} \Gamma(k+2\lambda)}{k! (k+\lambda) (\Gamma(\lambda))^2}\right)\,.
\]
Then we use the Gegenbauer polynomial decomposition $\varphi_1(s)\overset{\text{id}}{=}  \Vec{w}_1$ and $\varphi_{2}(s)\overset{\text{id}}{=}  \Vec{w}_2$ to write $M$ as a blockmatrix in the basis $(\Vec{w}_1,\Vec{w}_2)^T
$ as
\begin{multline*}
    M\coloneqq\left(\frac{p \alpha^2}{2}\right)^{\frac{n-4}{2}} \frac{p\alpha}{(p-2)} \begin{pmatrix} N&0\\0&N \end{pmatrix} \Bigg[ \left(\frac{(p-2)\:\alpha}{2}\right)^2 \begin{pmatrix}
      G (B+2A)& 0 \\
      0 & G (B+2A) 
    \end{pmatrix}\\
     +  \begin{pmatrix}
        (1+2\alpha) \mathds{1} &0 \\
        0 &  (1-2\alpha) \mathds{1}
    \end{pmatrix} 
    + \frac{p\:\alpha^2}{4}\begin{pmatrix}
        (2-p)\,G & (2-p) \,G\\
        (2-p)\, G & (2-p)\, G
    \end{pmatrix} \Bigg] \,.
\end{multline*}
\begin{thm}
With the above notation, if there is some $\ket{w}=(\Vec{w}_1,\Vec{w}_2)^T$ such that
\begin{equation*}
    \bra{w}M\ket{w}_{\ell^2(\mathbb{N}_0^2)}<0
\end{equation*}
then~\eqref{SCKN} is linear instable.
\end{thm}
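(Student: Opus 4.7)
The plan is to identify the $\ell^2$ quadratic form defined by $M$ with the $L^2$ quadratic form of the operator $A$ from Theorem~\ref{Equivalence for Lin Instab Theorem}, after an explicit change of variable that turns $A$ into a weighted spectral problem on $(-1,1)$. Once this identification is established, $\langle w, Mw\rangle<0$ immediately forces $A$ to fail to be positive semi-definite, and the conclusion follows from Theorem~\ref{Equivalence for Lin Instab Theorem} combined with the lemma preceding it.

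First, I would peel off the radial profile by writing $\varphi_i(s)=\varphi_*(s)\,\tilde\varphi_i(z)$ with $z=\tanh\bigl(\tfrac{p-2}{2}\alpha\,s\bigr)$. A short computation shows that $\varphi_*^{\,p-2}$ is a positive constant multiple of $1-z^2$, and that $\varphi_*(s)^2\,ds$ is a positive constant multiple of the Gegenbauer weight $(1-z^2)^{\lambda-1/2}\,dz$ with $\lambda=(n-3)/2$ and $n=2p/(p-2)$. Consequently, the map $\varphi_i\mapsto\tilde\varphi_i$ extends to an isometry, up to a positive constant, between $L^2(\mathbb{R})$ and $L^2\bigl((-1,1),(1-z^2)^{\lambda-1/2}\,dz\bigr)$, and a parallel calculation rewrites the derivative part of $A$ in terms of $(1-z^2)$ and $\partial_z$.

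Second, I would expand $\tilde\varphi_1,\tilde\varphi_2$ in the orthogonal basis $(C_k^\lambda)_{k\in\mathbb{N}_0}$, obtaining coefficient sequences $\vec{w}_1,\vec{w}_2$. The three ingredients appearing in $A$ then become explicit matrices via standard ultraspherical identities: multiplication by $1-z^2$ gives the symmetric banded matrix $G$ from the three-term recurrence for $C_k^\lambda$, the operator $z\,\partial_z$ gives the matrix named $A$ from the derivative formula for Gegenbauer polynomials, and the ultraspherical operator $-(1-z^2)\partial_z^2+(2\lambda+1)\,z\,\partial_z$ is diagonalized with spectrum $k(k+2\lambda)$, giving $B$. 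Assembling these pieces with the prefactors coming from the change of variable and the scaling of $\varphi_*$ reproduces the block structure of $M$ exactly. The diagonal matrix $N$ stores the squared norms of the $C_k^\lambda$ against the Gegenbauer weight and converts that weighted pairing into the standard $\ell^2$ pairing on coefficients, so that for $\varphi=(\varphi_1,\varphi_2)$ and $w=(\vec{w}_1,\vec{w}_2)^T$ one obtains the identity
\begin{equation*}
\langle \varphi,\,A\,\varphi\rangle_{L^2(\mathbb{R})\oplus L^2(\mathbb{R})} \;=\; \langle w,\,M\,w\rangle_{\ell^2(\mathbb{N}_0^2)}\,.
\end{equation*}

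Given this identity, assume $\langle w, Mw\rangle<0$ for some $w$. By density of finitely supported vectors in the form domain of $M$, one may further assume $w$ has only finitely many nonzero entries, so that $\tilde\varphi$ is polynomial in $z$ and $\varphi$ is smooth and exponentially decaying in $s$, hence lies in $H^1(\mathbb{R})\oplus H^1(\mathbb{R})$. Since $\varphi$ witnesses negativity of the quadratic form of $A$, Theorem~\ref{Equivalence for Lin Instab Theorem} gives linear instability of $\mathcal{Q}$, and the lemma preceding that theorem concludes that~\eqref{SCKN} is linearly unstable, as claimed. The main obstacle is the bookkeeping in the second step: verifying the three matrix identifications via Gegenbauer recurrence and derivative formulas, tracking all scalar prefactors correctly, and checking that the apparently non-symmetric factorization $N\cdot[\cdots]$ defining $M$ does indeed give a symmetric quadratic form once $N$ is absorbed into the pairing.
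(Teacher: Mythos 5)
Your proposal follows essentially the same route as the paper: factor out $\varphi_*$, change variables $z=\tanh\bigl(\tfrac{p-2}{2}\alpha s\bigr)$ so that the weight $(1-z^2)^{(n-4)/2}$ appears, expand in Gegenbauer polynomials $C_k^\lambda$ with $\lambda=(n-3)/2$, identify the three differential pieces with the banded matrices $G$, $A$, $B$, use $N$ to convert the weighted pairing to $\ell^2$, and invoke Theorem~\ref{Equivalence for Lin Instab Theorem}. One small point: the final appeal to the lemma preceding Theorem~\ref{Equivalence for Lin Instab Theorem} is unnecessary (and mis-states what that lemma gives), since the claimed conclusion is linear instability itself, which already follows once you have $\langle\varphi,A\varphi\rangle<0$ together with Theorem~\ref{Equivalence for Lin Instab Theorem}.
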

\begin{proof}
Recall that the function $\varphi_*(s)\coloneqq\left(\frac{p\alpha^2}{2}\right)^\frac{1}{p-2} \left(\cosh\left(\frac{p-2}{2} \alpha s\right)\right)^{-\frac{2}{p-2}}$ solves
\begin{equation}
\label{phistar}-\partial_s^2\varphi_*+\alpha^2\,\varphi_*=\varphi_*^{p-1}\,.
\end{equation}
Now define for $\varphi \in \mathrm{H}^1(\mathbb{R)}$ the function $u$ such that $\varphi=u\cdot \varphi_*$.
An expansion of the square then shows that
\begin{align*}
\iR{|\varphi'|^2}=\iR{|\varphi_*\,u'+\varphi_*'\,u|^2}=\iL{|u'|^2}2+\iR{\left(u^2\right)'\,\varphi_*\,\varphi_*'}+\iR{|u|^2\left(\varphi_*'\right)^2}\,.
\end{align*}
Define the measure 
\begin{equation*}
    d\mu_k\coloneqq |\varphi_*|^k\,ds\,.
\end{equation*}
Then, using an integration by parts, we obtain
\[
\iR{|\varphi'|^2}=\iL{|u'|^2}2-\iR{|u|^2\,\varphi_*\,\varphi_*''}
\]
and, using~\eqref{phistar} and $\iR{|\varphi|^2}=\iL{|u|^2}2$,
\[
\iR{|\varphi'|^2}+\alpha^2\iR{|\varphi|^2}=\iL{|u'|^2}2+\iL{|u|^2}p\,.
\]
which we obtained by using $\iR{|\varphi|^p}=\iL{|u|^p}p$.\\
We now want to utilize a change of variables. The change of variables is defined as
\begin{equation*}
    s\mapsto z(s)=\tanh\left(\frac{p-2}{2} \alpha s\right)\,.
\end{equation*} 
It is tailored such that
\[
1-z(s)^2=\frac1{\left(\cosh \left(\frac{p-2}{2} \alpha s\right)\right)^2}=\frac{2}{p\alpha^2}|\varphi_*(s)|^{p-2}\,.
\]
Define the number $n:=\frac{2\,p}{p-2}$ which is in $(2,\,\infty)$. Thereafter, consider the transformed function $w\big(z(s)\big)=u(s)$. By calculation obtain the following relation:
\[
\frac{dz(s)}{ds}= \frac{(p-2)\alpha}{2}{\left(\cosh\left( \frac{p-2}{2}\alpha s\right)\right)^{-2}}
    =\frac{(p-2)\alpha}{2} \frac{2}{p\alpha^2}|\varphi_*(s)|^{p-2}
    =\frac{(p-2)\alpha}{2} (1-z(s)^2)\, .
    \]
Using this relation in the change of variables, we obtain the following transformed integrals
\begin{align*}
&\frac{p\alpha }{(p-2)} \left( \frac{p\alpha^2}{2}\right)^{\frac{n-2}{2}}\isn{|w(z)|^2}{\frac{n-2}2}=\iL{|u(x)|^2}p\,,\\
&\frac{p\alpha }{(p-2)} \left( \frac{p\alpha^2}{2}\right)^{\frac{n-2}{2}}\isn{|w(z)|^p}{\frac{n-2}2}=\iL{|u(x)|^p}p\,,\\
&\frac{p\alpha }{(p-2)} \left( \frac{p\alpha^2}{2}\right)^{\frac{n-4}{2}}\isn{|w(z)|^2}{\frac{n-4}2}=\iL{|u(x)|^2}2\,,\\
&\frac{(p-2) }{p\alpha} \left( \frac{p\alpha^2}{2}\right)^{\frac{n}{2}}\isn{|w'(z)|^2}{\frac n2}=\iL{|u'(x)|^2}2\,.\\
\end{align*}
Now for a $\phi(s)=(\varphi_1,\varphi_{2})^T$, we are interested in the quadratic form $\langle \phi, A \phi \rangle_{\mathrm{L}^2(\mathbb{R}\times\mathbb{C}^2)}$

\begin{align*}
    \frac{p\alpha }{(p-2)} \left( \frac{p\alpha^2}{2}\right)^{\frac{n-4}{2}} \int_{2}^1\Biggl[  &
    \left(\frac{(p-2)\alpha}{2}\right)^2 
    \left\langle 
        \begin{pmatrix} w'_1(z)\\w'_2(z)\end{pmatrix}, 
        \begin{pmatrix} w'_1(z)\\w'_2(z)\end{pmatrix} 
    \right\rangle_{\mathbb{C}^2} 
    (1-z^2)^{\frac{n}{2}} \\
    &+ \frac{p\alpha^2}{2} 
    \left\langle 
        \begin{pmatrix} w_1(z)\\w_2(z)\end{pmatrix}, 
        \left( \mathds{1} - \frac{1}{2}
        \begin{pmatrix} p & (p-2) \\ (p-2) & p \end{pmatrix} \right) 
        \begin{pmatrix} w_1(z)\\w_2(z)\end{pmatrix} 
    \right\rangle _{\mathbb{C}^2} 
    (1-z^2)^{\frac{n-2}{2}} \\
    &+  
    \left\langle 
        \begin{pmatrix} w_1(z)\\w_2(z)\end{pmatrix}, 
        \begin{pmatrix} (1+2\alpha) & 0 \\ 0 & (1-2\alpha) \end{pmatrix} 
        \begin{pmatrix} w_1(z)\\w_2(z)\end{pmatrix} 
    \right\rangle_{\mathbb{C}^2} 
    (1-z^2)^{\frac{n-4}{2}}
    \Biggr]dz
\end{align*}

Now we consider the summands one by one. 
If one writes the transfomed functions $w_i$ in a Gegenbauer polynomial basis as an $\Vec{w_i}\in \ell^2(\mathbb{N}_0)$ vector, then the above quadratic form can be written as $\langle \Vec{w_i}, M \Vec{w_i} \rangle_{\ell^2(\mathbb{N}_0)}$ for a matrix 
\[
M= \begin{pmatrix}
    M_1 & M_2\\
    M_3 & M_4
\end{pmatrix}
\]
where $M_i \in \mathbb{R}^{\mathbb{N}_0\times \mathbb{N}_0}$. The goal is to show the claim above holds and to compute this matrix $M$.
The Gegenbauer polynomial basis used is the one for parameter $\lambda=(n-3)/2$. We will follow the unsual convention in calling these polynomials $C^\lambda_k(z)$ for $k\in \mathbb{N}_0$. Furthermore, for $k<0$ we will use the convention that $C^\lambda_k(z)=0$ to make case distinctions obsolete. Notice that $n>2$ implies $\lambda>-1/2$ which ensures the existence of the Gegenbauer polynomials.
The polynomials are orthogonal with respect to the $\mathrm{L}^2([-1,1],\mathbb{C}, d\mu)$ scalar product with weight $d\mu(z)=(1-z^2)^\frac{n-4}{2}dz $. They are eigenfunctions to the ultraspherical operator 
\begin{align*}
    \mathcal L_{(n-3)/2}\coloneqq -(1-z^2)\frac{d^2}{dz^2}+(n-2)z \frac{d}{dz}
\end{align*}
with eigenvalues $\lambda_k= k(k+n-3) $ for the $C_k^\frac{n-3}{2}(z)$. Therefore it holds that
\begin{align*}
    \isn{|w'|^2}{\frac{n}{2}}= \langle w,\mathcal L_{(n-3)/2}w \rangle_{\mathrm{L}^2([-1,1],\mathbb{C}, d\mu)}+2\int_{2}^{+1} (1-z^2)^{\frac{n-4}{2}}\: \overline{w}\: z\frac{d}{dz} w(z)\: dz\,.
\end{align*}
Now turn the attention to the $ z\frac{d}{dz}$ using classical identities~\cite[(15) and (16)]{weisstein_gegenbauer}, specifically identity :
\begin{align*}
     z\frac{d}{dz} C^\lambda_k(z) &\overset{(15)}{=} k\:  C^\lambda_k(z)+\frac{d}{dz} C^\lambda_{k-1}(z)\\
     &\overset{(16)}{=} k\:  C^\lambda_k(z)+ (k+n-5) \:C^\lambda_{k-2}(z) + z\frac{d}{dz} C^\lambda_{k-2}(z)\\
     &\qquad \qquad \vdots \qquad \vdots \qquad \vdots\\
     &=k\: C^\lambda_k(z) + \sum_{j\in \mathbb{N}_0,\,j<k,\, j+k \text{ is even}} (2j+n-3) C^\lambda_j(z)
\end{align*}
As another operator to be expressed in the Gegenbauer polynomial basis, let us consider $z^2\:C^\lambda_k(z)$. We want to express the operator $z^2$ in our Gegenbauer basis by using~\cite[(19)]{weisstein_gegenbauer}:
\begin{align*}
    z\:C^\lambda_k(z) = \frac{1}{2(k+\lambda)} \left((k+1)C_{k+1}^\lambda(z) +(k-1+2\lambda )C_{k-1}^\lambda(z) \right)\,.
\end{align*}
By using the recurrence relation twice, we obtain:
\begin{align*}
    z^2\:C^\lambda_k(z) = \eta_k^2\: C_{k+2}^\lambda(z) + \eta_k^1  \:C^\lambda_k(z) + \eta_k^0\: C_{k-2}^\lambda(z)\,.
\end{align*}
The result then follows by collecting of all terms.
\end{proof}

\section{Numerical method and results}
\label{Numerical Method}

\begin{figure}[ht]
\begin{center}
\includegraphics[width=16cm]{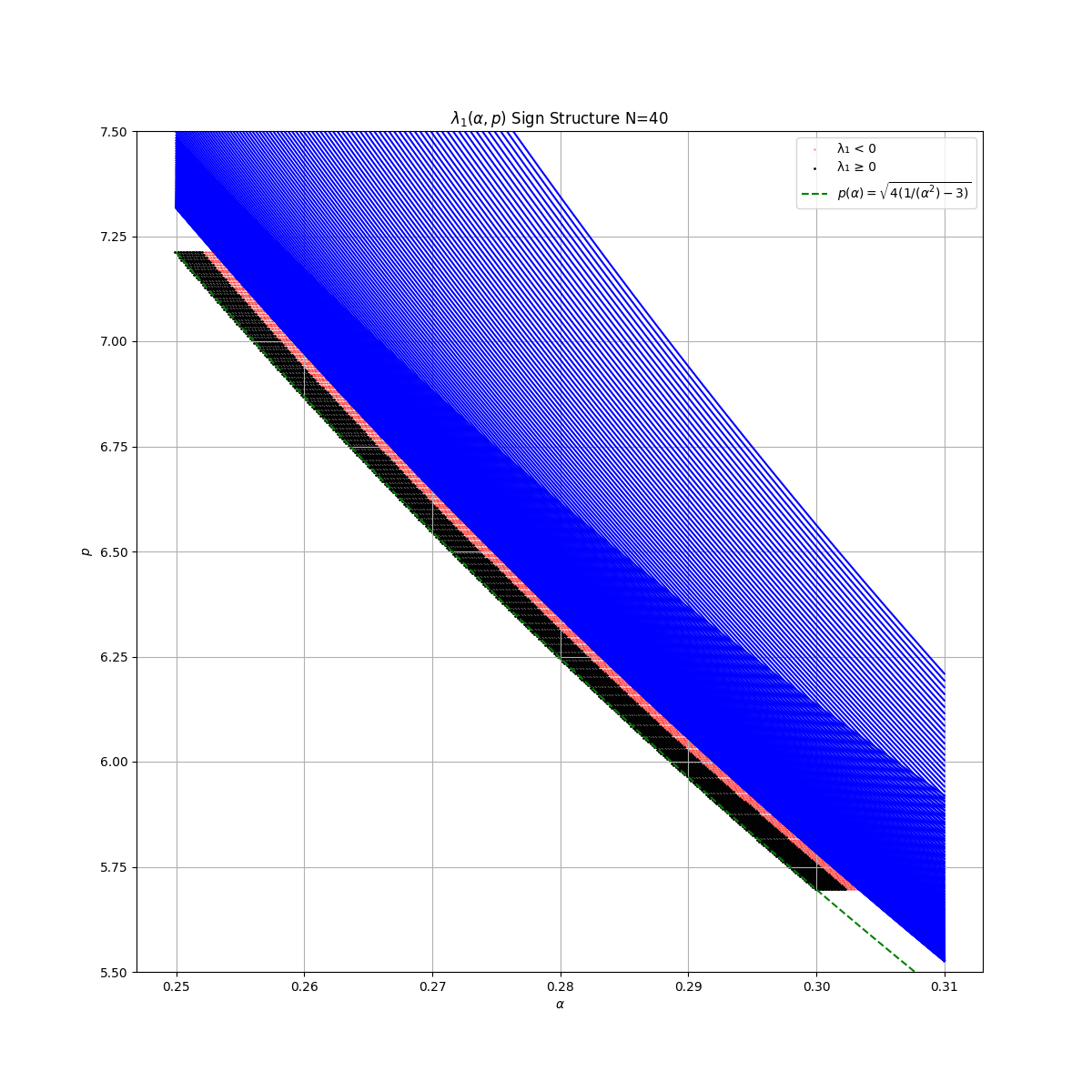}
\end{center}
\caption{\small Sign structure of the lowest eigenvalue to the truncated linearized problem in a part of the unknown region for $N=40$. The blue region and everything upwards of these lines is known to have non-symmetric minimizers. Everything lower than the green line is the region where \cite{bonheure_symmetry_2020} established symmetry of the minimizers.}
\label{fig:sign40stability_regions}
\end{figure}
\begin{figure}[ht]
\begin{center}
\includegraphics[width=16cm]{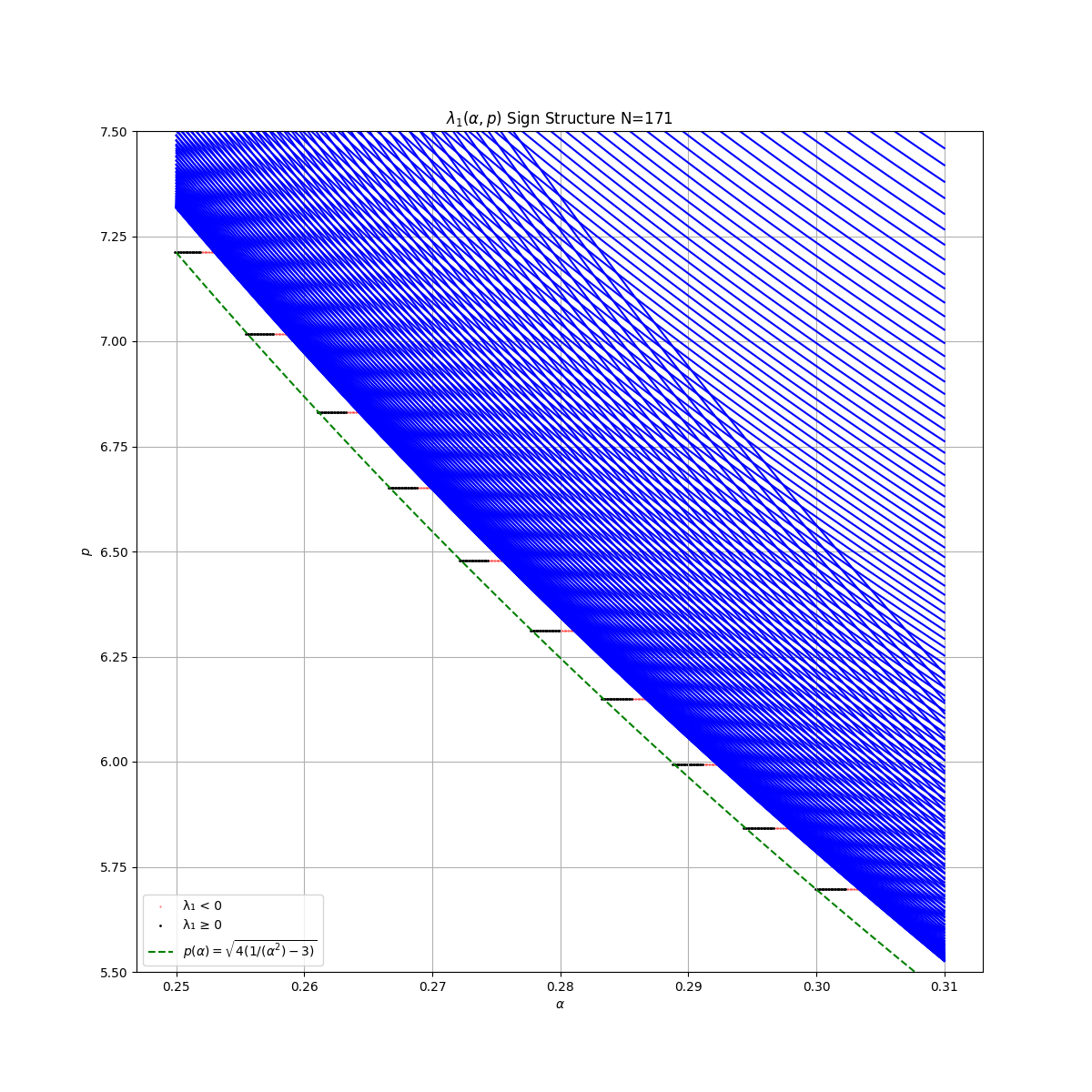}
\end{center}
\caption{\small Sign structure of the lowest eigenvalue to the truncated linearized problem in a part of the unknown region for $N=171$. The blue region and everything upwards of these lines is known to have non-symmetric minimizers. Everything lower than the green line is the region where \cite{bonheure_symmetry_2020} established symmetry of the minimizers. }
\label{fig:sign171stability_regions}
\end{figure}
The method used to do the numerics goes as follows.
As in Section~\ref{Reformulation in a Gegenbauer Polynomial Basis}, the operator $A$ is written as a $2\times 2$ block matrix $M$ with each block corresponding to an infinite dimensional matrix.
Each block is now truncated into its uppermost $N\times N$-block. This is a finite dimensional truncation of the whole infinite dimensional matrix. The finite dimensional truncation takes into account only the contributions from the first $N$ Gegenbauer polynomials. 
The lowest eigenvalue of this truncated matrix is then computed via standard sparse linear algebra solvers from the python SciPy Sparse Linear Algebra library.
We performed this computation for values $(\alpha,\,p)$ in the parameter space for which the $p$ is larger than the $p_*(\alpha)$ for which symmetry of the optimizing functions was established by~\cite{bonheure_symmetry_2020}.

The assumption needed for our numerics to obtain relevant results is, that the upper component (and respectively the lower component) of the lowest eigenfunction to $M$ can be well approximated by the first $N$ Gegenbauer polynomials.
Unsurprisingly, the numerically calculated lowest eigenfunction is radially symmetric and radially decreasing (see Figure \ref{fig:EigenvectorLinearizedestability_regions}). This can be analytically shown via a simple rearrangement inequality since $\phi_*$ is radially symmetric and radially decreasing. The numerically computed lowest eigenfunction puts mass on both the upper component and the lower component.

Furthermore, it seems to put mass only on the first ten or so even Gegenbauer polynomials, even when we put $N$ as high as $170$. We can take this as some numerical evidence, that our approximation is not significantly losing essential features. See Figures~\ref{fig:sign40stability_regions} and~\ref{fig:sign171stability_regions}.
\begin{figure}[ht]
\begin{center}
\includegraphics[width=10cm]{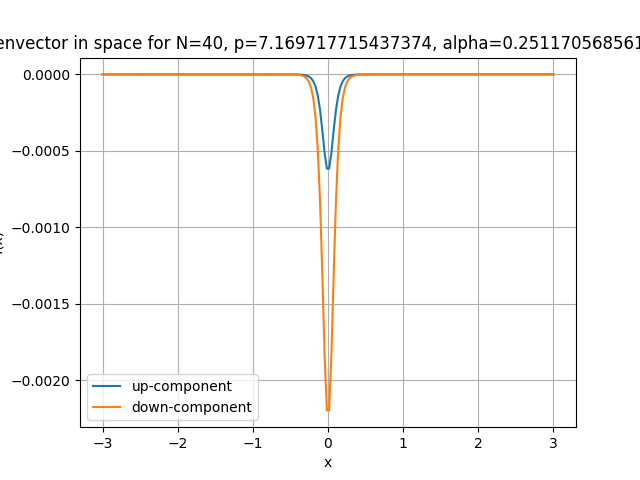}
\end{center}
\vspace*{-0.5cm}
\caption{\small Numerically computed eigenvector corresponding to the lowest eigenvalue for $A$ in $s\in \mathbb{R}$ coordinates for $N=40$, $p=7.169717715437374$, $\alpha=0.2511705685618729$. The radial decreasing symmetry is clearly visible.}
\label{fig:EigenvectorLinearizedestability_regions}
\end{figure}
\begin{figure}[hb]
\begin{center}
\includegraphics[width=12cm]{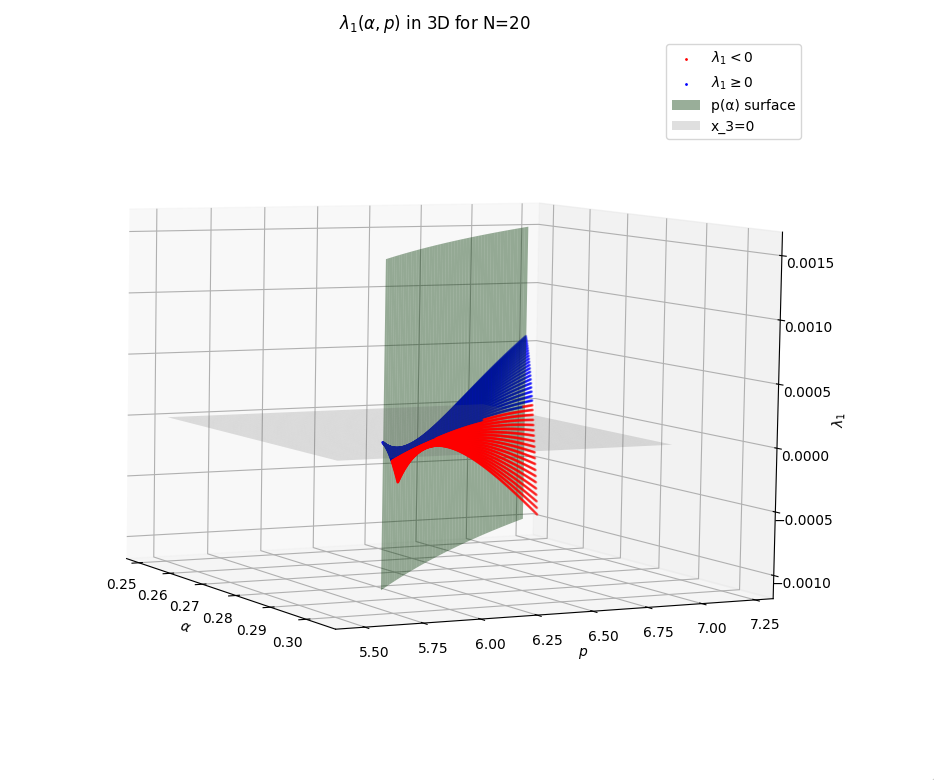}
\end{center}
\vspace*{-1.5cm}
\caption{\small Sign structure of the lowest eigenvalue to the truncated linearized problem in a part of the unknown region for $N=20$. Everything to the left of the green surface is the region where \cite{bonheure_symmetry_2020} established symmetry of the minimizers. The computations for the lowest $p$ for a given $\alpha$ lie on that surface. The figure indicates a second order phase transition at the threshold between linear stability and instability.}
\label{fig:3dstability_regions}
\end{figure}
\clearpage
\medskip\noindent{\bf Acknowledgment:} The authors wish to thank Maria J.~Esteban and Michael Loss for inspiring discussions, as the current work is an extension of~\cite{dolbeault_ckn_2025} and our discussions with them certainly influenced our way of thinking about the problem. Partial support through US National Science Foundation grant DMS-1954995 (R.L.F.), as well as through the German Research Foundation through EXC-2111-390814868 and TRR 352-Project-ID 470903074 (R.L.F.), and by the CONVIVIALITY Project (ANR-23-CE40-0003) of the French National Research Agency (J.D.) is acknowledged.\\
\noindent{\scriptsize\copyright\,\the\year\ by the authors. This paper may be reproduced, in its entirety, for non-commercial purposes. \href{https://creativecommons.org/licenses/by/4.0/legalcode}{CC-BY 4.0}}

\end{document}